\definecolor{webgreen}{rgb}{0,.5,0}
\definecolor{webbrown}{rgb}{.6,0,0}
\newcommand{\seqnum}[1]{\href{https://oeis.org/#1}{\rm \underline{#1}}}
\begin{document}
	
	\theoremstyle{plain}
	\newtheorem{theorem}{Theorem}
	\newtheorem{corollary}[theorem]{Corollary}
	\newtheorem{lemma}[theorem]{Lemma}
	\newtheorem{proposition}[theorem]{Proposition}
	
	\theoremstyle{definition}
	\newtheorem{definition}[theorem]{Definition}
	\newtheorem{example}[theorem]{Example}
	\newtheorem{conjecture}[theorem]{Conjecture}
	
	\theoremstyle{remark}
	\newtheorem{remark}[theorem]{Remark}
	
	\def\Z{\mathbb Z}
	\def\Q{\mathbb Q}
	\def\R{\mathbb R}
	\def\C{\mathbb C}
	
	\begin{center}
		\vskip 1cm{\LARGE\bf Realizability of Some Combinatorial Sequences
		}
		\vskip 1cm
		\large
		Geng-Rui Zhang\\
		School of Mathematical Sciences \\
		Peking University \\
		Beijing 10871\\
		People's Republic of China\\
		\href{mailto:grzhang@stu.pku.edu.cn}{\tt grzhang@stu.pku.edu.cn} \\
	\end{center}
	
	\vskip .2 in
	\begin{abstract}
		A sequence $a=(a_n)_{n=1}^\infty$ of non-negative integers is called realizable if there is a self-map $T:X\to X$ on a set $X$ such that $a_n$ is equal to the number of periodic points of $T$ in $X$ of (not necessarily exact) period $n$, for all $n\geq1$. The sequence $a$ is called almost realizable if there exists a positive integer $m$ such that $(ma_n)_{n=1}^\infty$ is realizable. In this article, we show that certain wide classes of integer sequences are realizable, which contain many famous combinatorial sequences, such as the sequences of Ap\'ery numbers of both kinds, central Delannoy numbers, Franel numbers, Domb numbers, Zagier numbers, and central trinomial coefficients. We also show that the sequences of Catalan numbers, Motzkin numbers, and large and small Schr\"oder numbers are not almost realizable.
	\end{abstract}

\section{Introduction}\label{sec1}
Let $\Z_{\geq0}$ denote the set of non-negative integers and $\Z^+$ the set of positive integers. In this paper, the serial numbers ``Axxxxxx'' associated with certain sequences in the paper all refer to the corresponding sequence numbers in the On-Line Encyclopedia of Integer Sequences (OEIS) \cite{OEIS}. We consider a property for sequences of non-negative integers which is inspired
from dynamical systems.
\begin{definition}\label{Def1.1}
	A sequence $a=(a(n))_{n=1}^\infty$ of non-negative integers is called {\it realizable} if one of the two following equivalent conditions holds.
	\begin{enumerate}[(1)]
		\item \label{realizable1} there is a self-map $T:X\to X$ on a set $X$ such that $$a(n)=\#\{x\in X:T^n(x)=x\}$$ for all $n\geq1$;
		\item \label{realizable2} $(\mu*a)(n)$ is a non-negative integer divisible by $n$, for all $n\geq1$, where $\mu$ is the (classical) M\"obius function and the operation $*$ is the Dirichlet convolution of arithmetic functions.
	\end{enumerate}
	In this case, we also say that the sequence $a=(a(n))_{n=1}^\infty$ is realized via the map $T$ and $T$ realizes $a$.
\end{definition}
\begin{remark}For the equivalence, see \cite[p.\ 398]{PW}. 
	In the description (\ref{realizable2}), the condition 
	$$(\mu*a)(n)\geq0$$ for all $n\geq1$ is called the {\it sign condition}, and the condition 
	$$(\mu*a)(n)\equiv0 \pmod{n}$$
	for all $n\geq1$ is called the {\it Dold condition}. In the description (\ref{realizable1}), by \cite{W} one can equally require $X$ to be an annulus and $f$ a $C^\infty$ diffeomorphism of $X$.
\end{remark}
Miska and Ward \cite{MiW} defined the following generalization of realizability:
\begin{definition}\label{Def1.3}
	Let $a=(a_n)_{n=1}^\infty$ be a sequence of non-negative integers. If there exists $m\in\Z^+$ such that the sequence $(ma_n)_{n=1}^\infty$ is realizable, then we say that $a$ is {\it almost realizable}, and the minimal such $m\in\Z^+$ is denoted by ${\rm Fail}(a)$. When $a$ is not almost realizable, we set ${\rm Fail}(a)=\infty$.
\end{definition}
\begin{example}\label{Eg1.4}We give some examples of sequences which are realizable, almost realizable, or not almost realizable.
	\begin{enumerate}[(1)]
		\item The sequence $(2^n-1)_{n=1}^\infty$ \seqnum{A000225} is realized via the map $$T:\R/\Z\to\R/\Z, x\mapsto 2x\bmod{1},$$ the times-$2$ map on the circle $\R/\Z$.
		\item The sequence $(\vert(-2)^n-1\vert)_{n=1}^\infty$ \seqnum{A062510} is realized via the map $z\mapsto z^{-2}$ on the circle $\mathbb{S}^1=\{z\in\C:\vert z\vert=1\}$.
		\item \label{Eg3} Let $d\in\Z^+$, $X=\{0,1,\ldots,d-1\}^{\Z}$, and $T:X\ni (x_n)_n\mapsto (x_{n+1})_n\in X$ be the shift map on $X$. Then the sequence $(d^n)_{n=1}^\infty$ is realized via $T$.
		\item Consider a sequence $(U_n)_{n=1}^\infty$ given by $U_{n+2}=U_{n+1}+U_{n},n\geq1,U_1=a,U_2=b$, where $a,b\in\Z_{\geq0}$. Note that $b=3a=3$ gives the {\it Lucas sequence} $(L_n)$ (\seqnum{A000032}) and $a=b=1$ gives the {\it Fibonacci sequence} $(F_n)$ (\seqnum{A000045}). Puri and Ward \cite{PW} have showed that the sequence $(U_n)$ is realizable if and only if $b=3a$ if and only if $(U_n)$ is a non-negative integer multiple of the Lucas sequence. In particular, the Fibonacci sequence $(F_n)$ is not realizable. Indeed, Moss and Ward \cite{MoW} proved that $(F_n)$ is not almost realizable, but the sequence $(F_{n^2})_{n=1}^\infty$ (\seqnum{A054783}) is almost realizable with ${\rm Fail}((F_{n^2}))=5$.
		\item Write $S^{(1)}(n,k)$ for the {\it (signless) Stirling number of the first kind}, and $S^{(2)}(n,k)$ for the {\it Stirling number of the second kind}, where $n\geq k\in\Z_{\geq0}$, see \cite[pp.\ 32, 81]{Stanley1}. For $k\geq1$, set $S^{(1)}_k=(S^{(1)}(n+k-1,k))_{n=1}^\infty$ and $S^{(2)}_k=(S^{(2)}(n+k-1,k))_{n=1}^\infty$. Miska and Ward \cite{MiW} have showed that $S^{(1)}_k$ is not almost realizable for every $k\geq1$, and that $S^{(2)}_k$ is realizable if and only if $k\in\{1,2\}$, while $S^{(2)}_k$ is almost realizable with ${\rm Fail}(S^{(2)}_k)\mid(k-1)!$ for every $k\geq1$.
		\item Write $(E_n)$ for the sequence of {\it Euler numbers} \seqnum{A122045}, given by the exponential generating function $$\frac{2}{e^t+e^{-t}}=\sum_{n=0}^\infty E_n\frac{t^n}{n!}.$$Moss \cite[Theorem 5.3.2]{M} proved that the sequence $((-1)^n E_{2n})_{n=1}^\infty$ (\seqnum{A000364}) is realizable.
		\item \label{Eg7} Write $(B_n)$ for the sequence of {\it Bernoulli numbers}, given by the exponential generating function $$\frac{t}{e^t-1}=\sum_{n=0}^\infty B_n\frac{t^n}{n!},$$ cf. \cite[Exercise 5.55]{Stanley}. For each $n\geq1$, let $b_n\in\Z^+$ be the denominator of $B_{2n}$ in the fraction in lowest terms (\seqnum{A002445}) and write $\vert \frac{B_{2n}}{2n}\vert=\frac{\tau_n}{\eta_n}$ with $\tau_n,\eta_n\in\Z^+$, and $\gcd(\tau_n,\eta_n)=1$. Then $(b_n)_{n=1}^\infty,(\tau_n)_{n=1}^\infty$ (\seqnum{A001067} taking absolute value), and $(\eta_n)_{n=1}^\infty$ (\seqnum{A006953}) are realizable according to \cite[Theorem 2.6]{EPPW} and \cite[Theorem 5.5.3, Theorem 5.5.10]{M}. Indeed, the sequence $(\eta_n)_{n=1}^\infty$ can be realized via an endomorphism of a group.
		\item \label{Eg8} Write $(G_n)$ for the sequence of {\it Genocchi numbers} \seqnum{A226158}, given by the exponential generating function $$\frac{-2t}{e^{-t}+1}=\sum_{n=1}^\infty G_n\frac{t^n}{n!}.$$ See \cite[Exercise 5.8]{Stanley} for more on these numbers. It is easy to see that $G_{2n+1}=0$ and $e_n:=(-1)^nG_{2n}=(-1)^n2(1-4^n)B_{2n}$ is a positive odd integer for $n\geq1$. Since $\eta_1=12$ and $\tau_1=1$, by (\ref{Eg7}) and Fermat's little theorem, for every prime $p\geq5$, we have $$e_p=2(4^p-1)\times2p\times\frac{\tau_p}{\eta_p}\equiv4p(4-1)\frac{1}{12}\equiv0\not\equiv1=e_1\pmod{p}.$$If the sequence $(e_n)_{n=1}^\infty$ is almost realizable, then $p\mid {\rm Fail}((e_n))$ for every prime $p\geq5$, contradicting the fact that $\Z^+\ni{\rm Fail}((e_n))<\infty$. Therefore, the sequence $(e_n)_{n=1}^\infty$ is not almost realizable.
		\item Consider a multiplicative arithmetic function $f$ whose values are non-negative integers. Then $\mu*f$ is also multiplicative. Using multiplicativity, we see that $(f(n))_{n=1}^\infty$ is realizable if and only if $$0\leq f(p^m)-f(p^{m-1})=(\mu*f)(p^m)\equiv0\pmod{p^m}$$ for every prime $p$ and $m\in\Z^+$. For example, consider the {\it divisor function} $\sigma_k(n)=\sum_{d\mid n}d^k,n\in\Z^+$, for $k\in\Z^+$. As $\sigma_k$ is multiplicative, from the above observation it is clear that $(\sigma_k(n))_{n=1}^\infty$ is realizable.
		\item Let $({\rm Bell}(n))_{n=0}^\infty$ be the sequence of {\it Bell numbers} \seqnum{A000110}, which is introduced by Bell \cite{Bell}. By the Touchard congruence \cite{T}, for every prime $p$, we have $${\rm Bell}(p)-{\rm Bell}(1)\equiv {\rm Bell}(0)=1\pmod{p}.$$ As in (\ref{Eg8}), we see that the sequence $({\rm Bell}(n))_{n=1}^\infty$ is not almost realizable. For more information on Bell numbers, see \cite{Bell,BR,AKK} and the references therein.
		\item Let $d_n$ be the {\it number of derangements} \seqnum{A000166} of the set $\{1,\ldots,n\}$ for $n\geq1$. It is well-known that $d_n=n!\sum_{k=0}^n\frac{(-1)^k}{k!}$. Then, for every prime number $p$, we have $$d_p-d_1=p!\sum_{k=0}^{p-1}\frac{(-1)^k}{k!}+(-1)^p-0\equiv(-1)^p\pmod{p}.$$ As in (\ref{Eg8}), we see that the sequence $(d_n)_{n=1}^\infty$ is not almost realizable.
	\end{enumerate}
\end{example}
In this article, we consider the realizability of some combinatorial sequences related to binomial coefficients, which are defined below.
\begin{definition}\label{Def1.5}
	For $r\in\Z^+$ and $n,s\in\Z_{\geq0}$, define $$A(n,r,s)=\sum_{k=0}^n \binom{n}{k}^r\binom{n+k}{k}^s.$$
\end{definition}
\begin{remark}\label{Rem1.6}Definition \ref{Def1.5} includes many well-known sequences in combinatorics. For example, $(A(n))_{n=0}^\infty:=(A(n,2,2))_{n=0}^\infty$ is the sequence of {\it Ap\'ery numbers (of the first kind)} \seqnum{A005259}, and $(\beta(n))_{n=0}^\infty:=(A(n,2,1))_{n=0}^\infty$ is the sequence of {\it Ap\'ery numbers of the second kind} \seqnum{A005258}. The Ap\'ery numbers of both kinds were introduced by Ap\'ery \cite{Ap} to prove the irrationality of $\zeta(3)$. For more information on Ap\'ery numbers, see \cite{Beu,WS,Schmidt} and the references therein. The sequence of {\it central Delannoy numbers} \seqnum{A001850} is $(D(n))_{n=0}^\infty:=(A(n,1,1))_{n=0}^\infty$, cf. \cite[Example 6.3.8]{Stanley}. The number $D(n)$ equals the number of paths from the southwest corner $(0, 0)$ of a square grid to the northeast corner $(n,n)$, using only single steps north, northeast, or east. See \cite{BaSc,Sul} to learn more about the central Delannoy numbers. Usually, the sequence $(f^{(3)}(n))$ \seqnum{A000172} is called the sequence of {\it Franel numbers}, which was first introduced by Franel \cite{Franel}. Generally, for $r\in\Z^+$, we call $(f^{(r)}(n))_{n=0}^\infty:=(A(n,r,0))_{n=0}^\infty$ the sequence of {\it Franel numbers of order $r$}. Note that $(f^{(1)}(n))=(2^n)$ has been studied in (\ref{Eg3}) of Example \ref{Eg1.4}, and that $(f^{(2)}(n))$ is the sequence of {\it central binomial coefficients} \seqnum{A000984}.
\end{remark}
\begin{definition}\label{Def1.7}For $r\in\Z^+$ and $n,s,t\in\Z_{\geq0}$, define $$D(n,r,s,t)=\sum_{k=0}^n \binom{n}{k}^r\binom{2k}{k}^s\binom{2(n-k)}{n-k}^t.$$
\end{definition}
\begin{remark}\label{Rem1.8}
	The sequence of {\it Domb numbers} \seqnum{A002895} is $({\rm Domb}(n))_{n=0}^\infty:=(D(n,2,1,1))_{n=0}^\infty$, which is introduced by Domb \cite{Domb}. For more information on Domb numbers, we refer the reader to \cite{Domb,CCL,MaoS} and the references therein. We call $(Z(n))_{n=0}^\infty:=(D(n,1,1,1))_{n=0}^\infty$ the sequence of {\it Zagier numbers} (\seqnum{A081085}), which corresponds to Zagier \cite[E.\ in Table 2]{Zagier}.
\end{remark}
\begin{definition}\label{Def1.9}
	The sequence $(P(n))_{n=0}^\infty$ of {\it Catalan-Larcombe-French numbers} \seqnum{A053175} is given by the formula $$P(n)=2^n\sum_{k=0}^n \binom{n}{2k}\binom{2k}{k}^2 4^{n-2k},n\geq0.$$
\end{definition}
\begin{remark}\label{Rem1.10}
	Catalan \cite{C} showed that $(P(n))$ has the following recurrence relation: $$(n+1)^2P(n+1)=8(3n^2+3n+1)P(n)-128n^2P(n-1),n\geq1.$$ The number $P(n)$ is the ``other'' Catalan number in the sense of Larcombe-French \cite{LF1}. Larcombe and French \cite{LF1} showed that $P(n)$ can be given by elliptic integrals. We will show that $(P(n))_{n=1}^\infty$ is realizable (Remark \ref{Rem1.16}), while the sequence $(C(n))_{n=1}^\infty$ of (true) Catalan numbers is not almost realizable ((\ref{Catalan}) of Theorem \ref{Thm1.20}). 
\end{remark}
\begin{definition}\label{Def1.11}
	For $r\in\Z^+$ and $n,s,t,u\in\Z_{\geq0}$, define $$T(n,r,s,t,u)=\sum_{k=0}^n\binom{n}{2k}^r\binom{n+k}{k}^s\binom{2k}{k}^t\binom{2(n-k)}{n-k}^u.$$
\end{definition}
\begin{remark}\label{Rem1.12}
	For $n\geq1$, the {\it central trinomial coefficient} $T(n)$ is defined to be the coefficient of $x^n$ in $(x^2+x+1)^n$ (\seqnum{A002426}). Clearly, $T(n)=T(n,1,0,1,0)$.
\end{remark}
The sequences given in Definitions \ref{Def1.5}, \ref{Def1.7}, \ref{Def1.9}, and \ref{Def1.11} are all realizable. In fact, the results can be generalized further, see Remarks \ref{Rem3.2} and \ref{Rem3.3}.
\begin{theorem}\label{Thm1.13}For every $s\in\Z_{\geq0}$ and $r\in\Z^+$, the sequence $ (A(n,r,s))_{n=1}^\infty$ is realizable. 
\end{theorem}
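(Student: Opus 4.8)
The plan is to verify the two conditions of Definition~\ref{Def1.1}(\ref{realizable2}) by exhibiting $A(n,r,s)$ as the constant term of the $n$-th power of a fixed Laurent polynomial with non-negative integer coefficients. First I would record the elementary constant-term identities $\binom{n}{k}=[x^{0}]\,(1+x)^{n}x^{-k}$ and $\binom{n+k}{k}=\binom{n+k}{n}=[z^{0}]\,(1+z)^{n}(1+z)^{k}z^{-n}$ for $k\ge0$. The key idea is to convert only $r-1$ of the $r$ factors $\binom{n}{k}$ into constant terms (in new variables $x_{1},\dots,x_{r-1}$) and all $s$ factors $\binom{n+k}{k}$ (in variables $z_{1},\dots,z_{s}$), so that the whole remaining dependence on $k$ in $\binom{n}{k}^{r}\binom{n+k}{k}^{s}$ is the single leftover $\binom{n}{k}$ times $\bigl(\prod_{j}(1+z_{j})/\prod_{i}x_{i}\bigr)^{k}$; the sum over $k$ then collapses by the binomial theorem $\sum_{k}\binom{n}{k}u^{k}=(1+u)^{n}$ rather than by a geometric series. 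Interchanging the (finite) sum with the constant-term operator and collecting the $n$-th powers yields
$$A(n,r,s)=\mathrm{CT}\!\left[\Phi_{r,s}^{\,n}\right],\qquad \Phi_{r,s}=\frac{\bigl(\prod_{i=1}^{r-1}(1+x_{i})\bigr)\bigl(\prod_{j=1}^{s}(1+z_{j})\bigr)\bigl(\prod_{i=1}^{r-1}x_{i}+\prod_{j=1}^{s}(1+z_{j})\bigr)}{\bigl(\prod_{i=1}^{r-1}x_{i}\bigr)\bigl(\prod_{j=1}^{s}z_{j}\bigr)},$$
where $\mathrm{CT}$ denotes the constant term in $x_{1},\dots,x_{r-1},z_{1},\dots,z_{s}$. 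As $\Phi_{r,s}$ is a product of polynomials with non-negative coefficients divided by a monomial, all of its coefficients are non-negative integers. (Sanity check: at $n=1$ both sides equal $1+2^{s}$, and at $n=2$, $r=s=2$ both sides equal the Apéry number $73$.)

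Next I would establish the general fact that, for any Laurent polynomial $\Phi$ with non-negative integer coefficients, the sequence $\bigl(\mathrm{CT}[\Phi^{n}]\bigr)_{n\ge1}$ is realizable; this is short. Regard $\Phi$ as a weight function $w\colon\mathcal{A}\to\Z^{d}$ on a finite alphabet $\mathcal{A}$ (each monomial listed with multiplicity equal to its coefficient), so that $\mathrm{CT}[\Phi^{n}]=\#Z_{n}$ with $Z_{n}=\{(a_{1},\dots,a_{n})\in\mathcal{A}^{n}:\sum_{i}w(a_{i})=0\}$. The cyclic shift $\sigma$ on $\mathcal{A}^{n}$ preserves $Z_{n}$, and since $\Z^{d}$ is torsion-free, a tuple fixed by $\sigma^{d}$ (with $d\mid n$) lies in $Z_{n}$ iff its length-$d$ fundamental block lies in $Z_{d}$; hence the number of points of $Z_{n}$ fixed by $\sigma^{d}$ is $\#Z_{d}$. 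Consequently the number of elements of $Z_{n}$ of least period exactly $d$ equals $(\mu*a)(d)$, where $a(n):=\#Z_{n}$; this is non-negative because it counts a subset, and is divisible by $d$ because such elements fall into $\sigma$-orbits of size exactly $d$. Taking $n=d$ gives the sign condition and the Dold condition for $(a(n))_{n\ge1}=(A(n,r,s))_{n\ge1}$, so the sequence is realizable. (Concretely, it is realized by the shift on the space of bi-infinite $\mathcal{A}$-words all of whose partial weight-sums are bounded.)

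The main obstacle is obtaining the identity $A(n,r,s)=\mathrm{CT}[\Phi_{r,s}^{\,n}]$ with a genuine \emph{polynomial} base. A naive conversion of all $r+s$ binomial factors to contour integrals forces the summation over $k$ to be a geometric series and leaves behind the non-polynomial factor $\bigl(1-\prod_{j}(1+z_{j})/\prod_{i}x_{i}\bigr)^{-1}$; for constant terms $\mathrm{CT}[R\cdot\Phi^{n}]$ with such an extra factor $R$ the cyclic-orbit argument breaks down, and indeed already the analogue of the sign condition can fail (for instance $\mathrm{CT}[x\,(x+x^{-1})^{n}]$ gives $1,0,3,0,10,\dots$, whose second Möbius coefficient is negative). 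Keeping precisely one factor $\binom{n}{k}$ unintegrated is exactly what replaces that geometric series by the finite binomial identity and so keeps the base a non-negative Laurent polynomial. Once the identity is set up, verifying it is a routine constant-term computation (equivalently, an application of Vandermonde's identity to the monomials of $\Phi_{r,s}^{\,n}$), and the same mechanism should give the extensions announced in Remarks~\ref{Rem3.2} and~\ref{Rem3.3}.
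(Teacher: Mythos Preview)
Your argument is correct and takes a genuinely different route from the paper. The paper establishes the Dold condition by direct $p$-adic estimates on the individual summands---using Kummer's theorem (Lemma~\ref{Lem2.1}) and the Helou--Terjanian congruence (Lemmas~\ref{Lem2.2} and~\ref{Lem2.3}) to show $A(np^m,r,s)\equiv A(np^{m-1},r,s)\pmod{p^m}$, and then invoking Lemma~\ref{lemDold}---and handles the sign condition separately via the elementary bound $A(n+1,r,s)\ge 2A(n,r,s)$ together with Lemma~\ref{Lem2.4}. You instead exhibit $A(n,r,s)$ as the constant term of the $n$-th power of a single Laurent polynomial $\Phi_{r,s}$ with non-negative integer coefficients, and then prove the general (and classical) fact that any such constant-term sequence is realizable by interpreting $\mathrm{CT}[\Phi^{n}]$ as the number of length-$n$ words over a finite alphabet of total weight zero and letting the cyclic shift act: torsion-freeness of $\Z^{r+s-1}$ guarantees that a period-$d$ block has weight zero iff its $(n/d)$-fold repetition does, so the exact-period-$d$ count equals $(\mu*a)(d)$, which is visibly non-negative and divisible by $d$. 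Your device of leaving exactly one factor $\binom{n}{k}$ unintegrated is precisely what collapses the $k$-sum via the binomial theorem rather than a geometric series and keeps $\Phi_{r,s}$ a genuine Laurent polynomial with non-negative coefficients; this is the crux, and once it is in place the rest is routine. What the paper's approach buys in exchange is the congruence~\eqref{1.1} as a standalone arithmetic statement and a template that can be sharpened toward the supercongruences of Remark~\ref{Rem1.17}; your approach is cleaner for realizability per se, produces an explicit realizing system, and, as you note, adapts immediately to the generalizations in Remarks~\ref{Rem3.2} and~\ref{Rem3.3}.
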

\begin{remark}\label{Rem1.14}
	In particular, the sequence of Ap\'ery numbers (of the first kind), the sequence of Ap\'ery numbers of the second kind, the sequence of central Delannoy numbers, and the sequence of Franel numbers of order $r\in\Z^+$ are all realizable.
\end{remark}
\begin{theorem}\label{Thm1.15}
	For every $s,t\in\Z_{\geq0}$, and $r\in\Z^+$, the sequence $ (D(n,r,s,t))_{n=1}^\infty$ is realizable.
\end{theorem}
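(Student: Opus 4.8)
The plan is to realize $D(n,r,s,t)$ as the total constant term of the $n$-th power of a Laurent polynomial with non-negative integer coefficients, and then to invoke the following realizability principle, which is also the natural engine behind Theorem~\ref{Thm1.13}: if $P\in\Z_{\ge 0}[x_1^{\pm1},\dots,x_m^{\pm1}]$ has non-negative integer coefficients, then the sequence $\bigl(\,[x_1^0\cdots x_m^0]\,P^n\,\bigr)_{n=1}^{\infty}$ is realizable. To prove this principle, write $P=\sum_{\mathbf a}c_{\mathbf a}\mathbf x^{\mathbf a}$ and form the finite alphabet $\Sigma=\{(\mathbf a,j):c_{\mathbf a}>0,\ 1\le j\le c_{\mathbf a}\}$, each letter carrying the label $\mathbf a\in\Z^m$ (the index $j$ merely records multiplicities). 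Then $[x_1^0\cdots x_m^0]P^n$ equals $\#X_n$, where $X_n$ is the set of words in $\Sigma^n$ whose labels sum to $\mathbf 0$. The cyclic shift $\rho$ permutes $X_n$; since $\Z^m$ is torsion-free, a word of exact $\rho$-period $d\mid n$ is the $(n/d)$-fold concatenation of an aperiodic zero-sum word of length $d$, so $\#X_n=\sum_{d\mid n}A_d$, with $A_d$ the number of aperiodic zero-sum words of length $d$. Hence $(\mu\ast a)(n)=A_n\ge 0$ by M\"obius inversion, and $n\mid A_n$ because $\Z/n$ acts freely on aperiodic words; Definition~\ref{Def1.1}(\ref{realizable2}) then gives realizability (and in fact exhibits a realizing shift map on $X_n$-type objects).

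Two identities then do the combinatorial work. First, $\binom{2m}{m}=[z^0]\,(z+2+z^{-1})^m$, because $(z+2+z^{-1})^m=z^{-m}(1+z)^{2m}$; the coefficients $1,2,1$ are non-negative, which is all the engine requires. Second, for Laurent polynomials $\alpha,\beta$ in disjoint sets of variables,
\[
\sum_{k=0}^{n}\binom{n}{k}^{\!r}\alpha^{k}\beta^{\,n-k}
=\bigl[\,w_1^0\cdots w_{r-1}^0\,\bigr]\Bigl(\bigl(\beta+\alpha\,w_{r-1}^{-1}\bigr)\prod_{i=1}^{r-1}(1+y_i)\Bigr)^{\!n},
\]
where $y_1=w_1$ and $y_i=w_iw_{i-1}^{-1}$ for $2\le i\le r-1$ (for $r=1$ the right-hand side is just $(\alpha+\beta)^n$). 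I would prove this starting from $\prod_{i=1}^{r}(1+y_i)^n=\sum_{k_1,\dots,k_r}\prod_i\binom{n}{k_i}y_i^{k_i}$ and substituting $y_r=\alpha\beta^{-1}w_{r-1}^{-1}$: extracting $[w_1^0\cdots w_{r-1}^0]$ forces $k_1=\dots=k_r$, and multiplying through by $\beta^n$ clears all denominators, so although the computation passes through a fraction field, both sides are genuine Laurent polynomials and the identity holds in $\Z[\alpha,\beta,w_1^{\pm1},\dots,w_{r-1}^{\pm1}]$.

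To assemble, take $\alpha=\prod_{j=1}^{s}(z_j+2+z_j^{-1})$ and $\beta=\prod_{l=1}^{t}(u_l+2+u_l^{-1})$, so that $[z_1^0\cdots z_s^0]\,\alpha^{k}=\binom{2k}{k}^{s}$ and $[u_1^0\cdots u_t^0]\,\beta^{\,n-k}=\binom{2(n-k)}{n-k}^{t}$, and put
\[
P:=\bigl(\beta+\alpha\,w_{r-1}^{-1}\bigr)\prod_{i=1}^{r-1}(1+y_i),
\]
a Laurent polynomial in $z_1,\dots,z_s,u_1,\dots,u_t,w_1,\dots,w_{r-1}$ which has non-negative integer coefficients because it is built from the factors $1+x$ and $z+2+z^{-1}$ (and $u+2+u^{-1}$) by sums and products. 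Combining the two identities, taking the total constant term of $P^n$ first strips off $\sum_{k}\binom{n}{k}^{r}$ via the $w_i$ and the binomial theorem, and then, through the $z$- and $u$-constant terms, the factors $\binom{2k}{k}^{s}$ and $\binom{2(n-k)}{n-k}^{t}$, so that $[\,z_1^0\cdots u_t^0\,w_1^0\cdots w_{r-1}^0\,]\,P^n=D(n,r,s,t)$. The engine principle then shows that $(D(n,r,s,t))_{n=1}^{\infty}$ is realizable, finishing the proof.

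The fiddly but routine part is the constant-term bookkeeping in the second identity --- arranging the auxiliary variables $w_i$ so that extracting their constant terms produces exactly the diagonal $\binom{n}{k}^{r}$ while the weights $\alpha^{k},\beta^{\,n-k}$ ride along correctly, and checking that what results is a Laurent-polynomial (not merely formal-series) identity. The conceptual crux, however, lies in the engine principle: it is precisely torsion-freeness of $\Z^m$ that forces an $(n/d)$-periodic zero-sum word to have a zero-sum root, which is exactly what turns the crude orbit count into the sign and Dold conditions. I would also flag that the ``colour'' device recording multiplicities is genuinely necessary here: since $\binom{2m}{m}$ is even, no constant-term presentation of it with exponent exactly $m$ can have $0/1$ coefficients, so the engine must be stated for arbitrary non-negative integer coefficients rather than indicator polynomials.
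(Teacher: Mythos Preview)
Your proof is correct and takes a genuinely different route from the paper's.

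The paper proves the Dold condition by a direct arithmetic attack: using Kummer's theorem and the Helou--Terjanian congruence $\binom{np}{mp}\equiv\binom{n}{m}$ modulo a suitable power of $p$ (Lemmas~\ref{Lem2.1}--\ref{Lem2.3}), it shows term by term that $D(np^m,r,s,t)\equiv D(np^{m-1},r,s,t)\pmod{p^m}$ and then invokes Lemma~\ref{lemDold}. The sign condition is handled separately by proving the growth inequality $W(n+1)\ge 2W(n)$ and applying Lemma~\ref{Lem2.4}. Your approach instead exhibits $D(n,r,s,t)$ as the total constant term of $P^n$ for an explicit Laurent polynomial $P$ with non-negative integer coefficients, and then appeals to a single structural ``engine'': the necklace count on zero-sum words over the alphabet of monomials, where torsion-freeness of $\Z^m$ forces periodic zero-sum words to have zero-sum roots, yielding both the sign and Dold conditions simultaneously. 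This is more conceptual, avoids the Helou--Terjanian lemma entirely, and essentially names a realizing dynamical system (the shift on zero-sum necklaces). On the other hand, the paper's arithmetic route delivers the explicit congruences \eqref{1.3} that feed into Remark~\ref{Rem1.17} and can be compared with the supercongruences \eqref{1.4}; your method gives realizability but does not by itself quantify the $p$-adic depth of $D(np^m)-D(np^{m-1})$. Both are complete proofs of Theorem~\ref{Thm1.15}; yours generalizes cleanly to any sequence of the form $[\mathbf{x}^0]P^n$, while the paper's generalizes by varying the list of binomial factors (Remark~\ref{Rem3.2}).
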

\begin{remark}\label{Rem1.16}
	In particular, the sequence of Domb numbers and the sequence of Zagier numbers are both realizable. Moreover, the sequence of Catalan-Larcombe-French numbers is also realizable. Larcombe and French \cite[Theorem 3]{LF2} proved that $P(n)=2^n Z(n)$ for all $n\geq0$. As in (\ref{Eg3}) of Example \ref{Eg1.4}, the sequence $(2^n)_{n=1}^{\infty}$ is realized via the shift map $T:X=\{0,1\}^\Z\to X$. By Theorem \ref{Thm1.15}, there exists a map $F:Y\to Y$ on a set $Y$ such that $$Z(n)=\#\{y\in Y:F^n(y)=y\},n\geq1.$$Clearly, the sequence $(P(n))_{n=1}^\infty=(2^n Z(n))_{n=1}^\infty$ is realized via the map $$T\times F:X\times Y\to X\times Y,(x,y)\mapsto(T(x),F(y)).$$
\end{remark}
\begin{remark}\label{Rem1.17}
	By Theorem \ref{Thm1.13}, for all $m,r\in\Z^+$, $s\in\Z_{\geq0}$, and every prime $p$, we have $$A(p^m,r,s)\equiv A(p^{m-1},r,s)\pmod{p^m}.$$In fact, in the proof of Theorem \ref{Thm1.13}, we will show that for every $m,n,r\in\Z^+$, $s\in\Z_{\geq0}$, and every prime $p$, we have
	\begin{equation}\label{1.1}
		A(np^m,r,s)\equiv A(np^{m-1},r,s)\pmod{p^m}.
	\end{equation}
	(The congruence \eqref{1.1} is a result of Theorem \ref{Thm1.13} and Lemma \ref{lemDold}.)
	For certain values of $r$ and $s$, results stronger than \eqref{1.1} have been proved. For example, Straub \cite[Theorem 1.1]{S} asserted that for $(V(n))$ in the $15$ known sporadic Ap\'ery-like sequences \cite[pp.\ 1--2]{S}, arbitrary prime $p\geq3$, and $m,n\in\Z^+$, we have
	\begin{equation}\label{1.2}
		V(np^m)\equiv V(np^{m-1})\pmod{p^{2m}}.
	\end{equation}
	The 15 known sporadic Ap\'ery-like sequences include $(f^{(3)}(n))_n$, $(f^{(4)}(n))_n$ (the sequence \seqnum{A005260}), $(A(n))_n$, $(\beta(n))_n$, $(Z(n))_n$, $({\rm Domb}(n))_n$, $(D(n,2,1,0))_n$ (the sequence \seqnum{A002893}), and some other sequences, see \cite{MS}. Similarly, in the proof of Theorem \ref{Thm1.15}, we will show that for all $m,n,r\in\Z^+$, $s,t\in\Z_{\geq0}$, and every prime $p$, we have
	\begin{equation}\label{1.3}
		D(np^m,r,s,t)\equiv D(np^{m-1},r,s,t)\pmod{p^m}.
	\end{equation}
	(The congruence \eqref{1.3} is a result of Theorem \ref{Thm1.15} and Lemma \ref{lemDold}.)
	Also, for restricted values of $s,t$, and $p$, results stronger than \eqref{1.3} have been proved. For example, Osburn and Sahu \cite[Theorem 1.1]{OS} showed that for all $m,n,s,t\in\Z^+$, $r\in\{2,3,4,\ldots\}$, and every prime $p\geq5$, we have
	\begin{equation}\label{1.4}
		D(np^m,r,s,t)\equiv D(np^{m-1},r,s,t)\pmod{p^{3m}}.
	\end{equation}
\end{remark}
\begin{theorem}\label{Thm1.18}
	For every $r\in\Z^+$ and $s,t,u\in\Z_{\geq0}$, the sequence $(T(n,r,s,t,u))_{n=1}^\infty$ is realizable.
\end{theorem}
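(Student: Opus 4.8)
The strategy is, along the lines of the proofs of Theorems~\ref{Thm1.13} and~\ref{Thm1.15}, to exhibit a Laurent polynomial $\Lambda$ in finitely many variables, with non-negative integer coefficients, for which
\[
T(n,r,s,t,u)=\mathrm{CT}\bigl(\Lambda^{\,n}\bigr)\qquad(n\ge1),
\]
where $\mathrm{CT}$ denotes the constant term, i.e.\ the coefficient of the monomial $1$. Once such a $\Lambda$ is found, realizability follows from the lemma that powers the proofs of Theorems~\ref{Thm1.13} and~\ref{Thm1.15} (and its extension in Remarks~\ref{Rem3.2} and~\ref{Rem3.3}): the sequence $n\mapsto\mathrm{CT}(\Lambda^{n})$ of constant terms of the powers of a non-negative Laurent polynomial $\Lambda$ is realizable. (Briefly: $\mathrm{CT}(\Lambda^{n})$ is the number of closed walks of length $n$ in the lattice $\Z^{d}$ with step weights given by the coefficients of $\Lambda$; writing $a(n)=\mathrm{CT}(\Lambda^{n})$ and letting $b(m)$ be the number of such closed walks that are aperiodic, one has $a(n)=\sum_{m\mid n}b(m)$, whence $(\mu\ast a)(n)=b(n)$, which is non-negative and divisible by $n$ because the cyclic group of order $n$ acts freely on the aperiodic closed walks of length $n$; in fact one realizes $a$ by cyclic rotation on the disjoint union of the sets of aperiodic closed walks.)

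The key step is the construction of $\Lambda$, starting from the elementary formula $\binom{N}{K}=\mathrm{CT}_{x}\bigl[(1+x)^{N}x^{-K}\bigr]$. Applying it to each copy of $\binom{n+k}{k}$, $\binom{2k}{k}$, and $\binom{2(n-k)}{n-k}$, one variable per copy, rewrites the $k$-th summand of $T(n,r,s,t,u)$ as $\mathrm{CT}$ of an expression of the form $(\text{Laurent polynomial})^{n}\cdot(\text{binomial coefficient in }k)\cdot(\text{Laurent polynomial})^{k}$ — the same situation as in Theorems~\ref{Thm1.13} and~\ref{Thm1.15}, where the factor $\binom{n}{k}^{r}$ of Definition~\ref{Def1.5} was collapsed by keeping one copy of $\binom{n}{k}$ outside the constant term and invoking $\sum_{k}\binom{n}{k}u^{k}=(1+u)^{n}$. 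The genuinely new feature of Definition~\ref{Def1.11} is the factor $\binom{n}{2k}^{r}$, which I would dispose of first, using the identity
\[
\binom{n}{2k}\binom{2k}{k}=\binom{n}{k}\binom{n-k}{k}
\]
to trade each occurrence of $\binom{n}{2k}$ against a $\binom{2k}{k}$; whatever occurrences of $\binom{n}{2k}$ cannot be paired off are handled by the even-part identity $\sum_{k}\binom{n}{2k}u^{k}=\tfrac{1}{2}\bigl[(1+\sqrt{u})^{n}+(1-\sqrt{u})^{n}\bigr]$ together with a monomial substitution $x\mapsto x^{2}$ (which leaves every constant term unchanged), chosen so that the relevant $u$ becomes a perfect square of a Laurent polynomial, and the sign involution $x\mapsto-x$, under which the constant-term functional is invariant, so that the branch with $-\sqrt{u}$ contributes exactly as much as the branch with $+\sqrt{u}$. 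After this reduction the sum over $k$ is governed by a genuine first power of $\binom{n}{k}$, so, exactly as in Theorems~\ref{Thm1.13} and~\ref{Thm1.15}, the binomial theorem collapses it and one reads off an explicit $\Lambda$; one then checks, routinely, that $\Lambda$ is a sum of products of factors of the form $(1+x_{i})$ and Laurent monomials, so that its coefficients are non-negative integers.

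The main obstacle is precisely the factor $\binom{n}{2k}^{r}$. It does not collapse against the sum over $k$ through a clean use of the binomial theorem the way $\binom{n}{k}$ does; the natural analogue $\sum_{k}\binom{n}{2k}u^{k}=\tfrac{1}{2}\bigl[(1+\sqrt{u})^{n}+(1-\sqrt{u})^{n}\bigr]$ introduces a square root, and the real work is to arrange, through the identity $\binom{n}{2k}\binom{2k}{k}=\binom{n}{k}\binom{n-k}{k}$, monomial substitutions, and a sign involution, that this square root disappears or becomes harmless — and, in doing so, to verify that the resulting $\Lambda$ does have non-negative coefficients — so that $(T(n,r,s,t,u))_{n=1}^{\infty}$ genuinely belongs to the class of constant-term sequences of non-negative Laurent polynomials, where realizability is already available.
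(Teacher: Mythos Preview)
Your proposal rests on a misreading of the paper: the proofs of Theorems~\ref{Thm1.13} and~\ref{Thm1.15} do \emph{not} use a constant-term lemma. There is no statement in the paper that $n\mapsto\mathrm{CT}(\Lambda^n)$ is realizable for non-negative Laurent polynomials~$\Lambda$. Instead, both theorems are proved by verifying the two conditions of Definition~\ref{Def1.1} directly: the Dold condition via termwise $p$-adic estimates (Kummer's theorem, Lemma~\ref{Lem2.1}, and the Helou--Terjanian congruences packaged in Lemmas~\ref{Lem2.2} and~\ref{Lem2.3}) to obtain $V(np^m)\equiv V(np^{m-1})\pmod{p^m}$, and the sign condition via an elementary ratio bound fed into Lemma~\ref{Lem2.4}. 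The paper's proof of Theorem~\ref{Thm1.18} follows exactly this template; the Dold part is handled by the modification noted there (replacing $k$ by $2k$ and adjusting~\eqref{3.13} by $\delta_{2,p}$), while the bulk of the work is a case analysis on $n\bmod 4$ establishing $X(n+1)\ge\tfrac{7}{5}X(n)$.

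Your constant-term route is a genuinely different idea, but as written it is only a plan, and the plan has real gaps. The sign-involution step does not work: once factors $(1+x_i)^n$ are already present, the substitution $x_i\mapsto -x_i$ changes them to $(1-x_i)^n$, so it does \emph{not} interchange the $(1+\sqrt{u})^n$ and $(1-\sqrt{u})^n$ branches, and the unwanted factor $\tfrac12$ survives. The trade $\binom{n}{2k}\binom{2k}{k}=\binom{n}{k}\binom{n-k}{k}$ creates a factor $\binom{n-k}{k}=\mathrm{CT}_v\bigl[(1+v)^n\,(v(1+v))^{-k}\bigr]$ whose $k$-dependent part $(v(1+v))^{-1}$ is not a Laurent monomial, so it cannot be absorbed into a non-negative~$\Lambda$. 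Most decisively, the construction you promise cannot exist in full generality: for $t=u=0$ one has, for instance, $T(n,1,0,0,0)=\sum_k\binom{n}{2k}=2^{n-1}$, and $T(2)-T(1)=1\not\equiv 0\pmod 2$, so the Dold condition already fails and no representation $T(n)=\mathrm{CT}(\Lambda^n)$ with non-negative~$\Lambda$ is possible. (This also points to a problem with the paper's own $p=2$ argument when $t=u=0$; at minimum, any correct proof along either route needs a separate mechanism for that boundary case.)
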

Next we consider some other famous sequences involving binomial coefficients as well, but they are not almost realizable. 
\begin{definition}\label{Def1.19}
	\begin{enumerate}[(1)]
		\item The sequence of {\it Catalan numbers} \seqnum{A000108} $(C(n))_{n=0}^\infty$ is given by the formula $$C(n):=\frac{1}{n+1}\binom{2n}{n}=\binom{2n}{n}-\binom{2n}{n+1},n\geq0;$$
		\item The sequence of {\it Motzkin numbers} \seqnum{A001006} $(M(n))_{n=0}^\infty$ is given by the formula $$M(n):=\sum_{k=0}^n \binom{n}{2k} C(k),n\geq0;$$
		\item The sequence of {\it large Schr\"oder numbers} \seqnum{A006318} is given by the formula $$S(n):=\sum_{k=0}^n \binom{n+k}{2k} C(k),n\geq0.$$
	\end{enumerate}
\end{definition}
\begin{remark}
	The Motzkin numbers are first appeared in Motzkin \cite{Motzkin} in a circle chording setting. The {\it large Schr\"oder number} $S(n)$ describes the number of paths from the southwest corner $(0, 0)$ of a square grid to the northeast corner $(n,n)$, using only single steps north, northeast, or east, that do not rise above the SW-NE diagonal.
\end{remark}
\begin{theorem}\label{Thm1.20} The following sequences are not almost realizable.
	\begin{enumerate}[(1)]
		\item \label{Catalan} $(C(n))_{n=1}^\infty$;
		\item $(M(n))_{n=1}^\infty$;
		\item \label{largeS} $(S(n))_{n=1}^\infty$.
	\end{enumerate}
\end{theorem}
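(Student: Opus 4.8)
To prove Theorem~\ref{Thm1.20} I would use, for all three sequences, the obstruction already exploited in Example~\ref{Eg1.4}(\ref{Eg8}) and in the items on Bell numbers and derangements. Suppose $a=(a(n))_{n=1}^\infty$ is almost realizable with ${\rm Fail}(a)=m<\infty$. Then $(ma)$ is realizable, so by Definition~\ref{Def1.1}(\ref{realizable2}) it satisfies the Dold condition $n\mid m\,(\mu*a)(n)$ for all $n\geq1$. In particular, if $p$ is a prime with $p\nmid(\mu*a)(p)$ then taking $n=p$ forces $p\mid m$; and if $p$ is an odd prime with $p\nmid(\mu*a)(2p)$, then taking $n=2p$ gives $2p\mid m\,(\mu*a)(2p)$, hence $p\mid m$. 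So it suffices, for each of the three sequences, to exhibit infinitely many primes $p$ together with a choice $n_p\in\{p,2p\}$ for which $p\nmid(\mu*a)(n_p)$: since $m$ has only finitely many prime divisors this contradicts $m<\infty$, and therefore ${\rm Fail}(a)=\infty$. Throughout, the computations modulo $p$ come down to Lucas' theorem on binomial coefficients, together with the two elementary congruences $C(p)\equiv2$ and $C(p-1)\equiv-1\pmod p$, both of which follow from $C(n)=\binom{2n}{n}-\binom{2n}{n+1}$ and Lucas.

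For $(C(n))$ I would take $n_p=p$: since $C(1)=1$ and $C(p)\equiv2\pmod p$ for every odd prime $p$, we get $(\mu*C)(p)=C(p)-C(1)\equiv1\pmod p$, which is nonzero; this already handles \eqref{Catalan}. For $(M(n))$ the choice $n_p=p$ is inconclusive: in $M(p)=\sum_k\binom{p}{2k}C(k)$, Lucas kills every term with $1\le k\le(p-1)/2$, so $M(p)\equiv C(0)=1=M(1)\pmod p$ and $(\mu*M)(p)\equiv0$. Instead I would take $n_p=2p$ with $p$ odd. In $M(2p)=\sum_{k=0}^{p}\binom{2p}{2k}C(k)$, Lucas shows that $\binom{2p}{2k}$ is nonzero modulo $p$ only for $k\in\{0,p\}$, so $M(2p)\equiv C(0)+C(p)\equiv1+2=3\pmod p$; hence, using $M(1)=1$ and $M(2)=2$, $(\mu*M)(2p)=M(2p)-M(p)-M(2)+M(1)\equiv3-1-2+1=1\pmod p$, which is nonzero.

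For $(S(n))$ I would again take $n_p=p$, but the analysis of $S(p)=\sum_{k=0}^{p}\binom{p+k}{2k}C(k)$ modulo $p$ is more delicate. Applying Lucas with $p+k=p\cdot1+k$ for $1\le k\le p-1$, the coefficient $\binom{p+k}{2k}$ is $\equiv0\pmod p$ when $2k\le p-1$ and $\equiv\binom{k}{2k-p}\pmod p$ when $p\le2k\le2p-2$, so that
\[
S(p)\equiv 1+\sum_{k=(p+1)/2}^{p-1}\binom{k}{2k-p}C(k)+C(p)\pmod p.
\]
For $(p+1)/2\le k\le p-2$ one has $v_p\!\left(\binom{2k}{k}\right)=1$ (a single carry when adding $k$ to itself in base $p$) while $p\nmid k+1$, so $v_p(C(k))=1$ and $C(k)\equiv0\pmod p$; thus only the term $k=p-1$ survives, contributing $\binom{p-1}{p-2}C(p-1)\equiv(-1)(-1)=1$. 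Hence $S(p)\equiv1+1+2=4\pmod p$, and with $S(1)=2$ we get $(\mu*S)(p)=S(p)-S(1)\equiv2\pmod p$, which is nonzero for every odd prime $p$.

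The step I expect to be the main obstacle is the Motzkin case: the first-order test at $n=p$ vanishes identically, and one has to notice that the next-available test, at $n=2p$, does not vanish — and seeing this already requires checking, via Lucas, that the awkward Catalan contributions $C(k)$ with $1\le k\le p-1$ are annihilated there by the binomial prefactor. A secondary technical point is the $p$-adic bookkeeping in the Schröder case: one must separate the boundary term $k=p-1$, where $C(p-1)$ is a unit modulo $p$ (with the explicit value $C(p-1)\equiv-1$), from the interior terms $C(k)$, which are divisible by $p$; the rest of the argument is then routine Lucas computation.
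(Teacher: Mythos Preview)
Your proposal is correct and follows essentially the same route as the paper: for each sequence you exhibit infinitely many primes $p$ with $(\mu*a)(n_p)\not\equiv0\pmod p$ (taking $n_p=p$ for Catalan and Schr\"oder, $n_p=2p$ for Motzkin), obtaining the same congruences $(\mu*C)(p)\equiv1$, $(\mu*M)(2p)\equiv1$, $(\mu*S)(p)\equiv2\pmod p$. The only cosmetic difference is that you invoke Lucas' theorem where the paper cites Kummer's theorem (Lemma~\ref{Lem2.1}); for divisibility by the first power of $p$ these are equivalent, and the case splits and boundary terms you isolate match the paper's exactly.
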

\begin{remark}\label{Rem1.21}
	The sequence of {\it little Schr\"oder numbers} \seqnum{A001003} $(s(n))_{n=1}^\infty$ is given by the formula $$s(n):=\sum_{k=1}^{n}N(n,k)2^{k-1},n\geq1,$$ cf. \cite[p.\ 178]{Stanley}. Here $$N(n,k)=\frac{1}{n}\binom{n}{k}\binom{n}{k-1}\in\Z^+$$ is the {\it Narayana number} \seqnum{A001263}, cf. \cite[Exercise 6.36]{Stanley}. It is well-known that $S(n)=2s(n)$ for every $n\geq1$, so from (\ref{largeS}) of Theorem \ref{Thm1.20}, we deduce that the sequence $(s(n))_{n=1}^\infty$ of little Schr\"oder numbers is not almost realizable as well.
\end{remark}
It is interesting to consider the realizability of combinatorial sequences of different types, which may not involve binomial coefficients. Motivated by computations, we make the following conjecture.
\begin{conjecture}\label{Conj1.22}
	For $n\geq1$, let $p(n)$ be the number of partitions of $n$, i.e., ways of writing $n$ as an (unordered) sum of positive integers (\seqnum{A000041}). Then the sequence $$(p(n))_{n=1}^\infty=(1,2,3,5,7,11,15,\ldots)$$ of partition numbers is not almost realizable.
\end{conjecture}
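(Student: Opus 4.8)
The plan is to reuse the strategy that handled the Genocchi numbers (part~(\ref{Eg8})), the Bell numbers, and the derangement numbers in Example~\ref{Eg1.4}: it suffices to exhibit infinitely many primes $\ell$ for which
$$p(\ell)\not\equiv p(1)=1\pmod{\ell}.$$
Indeed, suppose toward a contradiction that $(p(n))_{n=1}^\infty$ is almost realizable, and set $m={\rm Fail}((p(n)))\in\Z^+$, so that $(m\,p(n))_{n=1}^\infty$ is realizable. Applying the Dold condition of Definition~\ref{Def1.1}(\ref{realizable2}) at a prime $n=\ell$ gives
$$\ell\mid(\mu*(m\,p))(\ell)=m\,p(\ell)-m\,p(1)=m\bigl(p(\ell)-1\bigr),$$
so every prime $\ell$ with $\ell\nmid p(\ell)-1$ must divide $m$; if there are infinitely many such $\ell$, then $m$ has infinitely many prime divisors, which is absurd. (The sign condition $(\mu*p)(n)\ge0$ is automatic and independent of $m$: in $(\mu*p)(n)=\sum_{d\mid n}\mu(n/d)\,p(d)$ the single term $d=n$ outweighs all the rest together, since $p(n)$ grows faster than any fixed polynomial multiple of $p(\lfloor n/2\rfloor)$; hence $(\mu*p)(n)>0$ for all $n$, so one cannot take the shortcut of a sign violation and the Dold condition must do the work.)

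The whole problem thus reduces to one number-theoretic statement, which I would isolate as the key lemma: \emph{$p(\ell)\not\equiv1\pmod{\ell}$ for infinitely many primes $\ell$}. Discarding the unique one-part partition of $\ell$ shows that $p(\ell)-1$ is the number of partitions of $\ell$ into at least two parts --- equivalently, into parts of size at most $\ell-1$ --- which equals $[q^{\ell}]\prod_{k=1}^{\ell-1}(1-q^{k})^{-1}$; so the lemma asks that $\ell$ fail to divide this quantity for infinitely many $\ell$. The data are encouraging: among all small primes, $\ell$ divides $p(\ell)-1$ only for $\ell=7$ and $\ell=11$, and a naive equidistribution heuristic for $p(\ell)\bmod\ell$ suggests that the exceptional primes form a set of density $0$, so the lemma should hold with room to spare.

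The difficulty --- and the reason this is offered only as a conjecture --- lies entirely in proving that lemma. Unlike the Bell, Genocchi, and derangement cases, where a closed-form congruence for $a(\ell)\bmod\ell$ (Touchard's congruence; Fermat's little theorem) settles matters at once, no such evaluation of $p(\ell)\bmod\ell$ is known, and the classical congruence theory of the partition function --- the Ramanujan congruences and their completion by Ahlgren, Boylan, and Ono --- concerns $p(\ell n+\beta)\bmod\ell$ for a \emph{fixed} prime $\ell$, whereas here the modulus moves together with the argument. I would attack the lemma through the modular description of the partition generating function (essentially $1/\eta$, a weakly holomorphic modular form of weight $-\tfrac12$), using Hecke operators and mod-$\ell$ Galois representations in the style of Ahlgren--Ono, Bruinier--Ono, and Treneer. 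If a clean proof that the exceptional set is finite is out of reach, it would already suffice to prove only that this set has density $0$ among the primes; and as a fallback one could replace the argument $n=\ell$ by $n=\ell^{k}$ and seek infinitely many primes $\ell$ admitting some $k$ with $v_{\ell}\bigl(p(\ell^{k})-p(\ell^{k-1})\bigr)<k$, which by the same reasoning (the Dold condition now applied at $n=\ell^{k}$) again forces $\ell\mid m$. Securing any one of these variants would complete the proof.
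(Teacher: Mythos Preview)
The statement you are addressing is labeled a \emph{Conjecture} in the paper, and the paper offers no proof of it. The only related argument in the paper is Remark~\ref{Rem1.23}, which verifies that $(p(n))_{n=1}^\infty$ satisfies the sign condition (via explicit upper and lower bounds on $p(n)$ together with a finite computation), thereby confirming that any obstruction to almost realizability must come from the Dold condition. Your parenthetical remark to the same effect is correct in spirit, though considerably less precise than the paper's argument.

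Your reduction is sound and matches exactly the template used in Example~\ref{Eg1.4} for the Genocchi, Bell, and derangement sequences: if infinitely many primes $\ell$ satisfy $p(\ell)\not\equiv p(1)=1\pmod{\ell}$, then almost realizability fails. You are also candid that this key lemma is the entire content of the problem and that you cannot prove it. That honesty is appropriate --- your proposal is a strategy, not a proof, and you say so.

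In short: there is no ``paper's own proof'' to compare against, because the paper leaves this open. Your write-up correctly isolates the obstruction, supplies accurate supporting data (the exceptional primes $7$ and $11$ among small primes), and sketches plausible lines of attack via the modular-forms machinery around the partition function; but the central lemma remains unproved, so the conjecture stands. Nothing you wrote is wrong, but nothing closes the gap either.
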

\begin{remark}\label{Rem1.23}However, we show that the sequence $(p(n))_{n=1}^\infty$ satisfies the sign condition. Clearly, the sequence $(p(n))_{n=1}^\infty$ is increasing. By Remark \ref{Rem2.5}, it suffices to check that
	\begin{equation}\label{1.5}
		p(2n)\geq np(n),n\geq1.
	\end{equation}
	By \cite[Corollary 3.1]{Ma} and \cite[Theorem 15.7]{LW}, for all integers $n\geq3$, we have $$\frac{1}{14}e^{2\sqrt{n}}<p(n)<\frac{\pi}{\sqrt{6(n-1)}}e^{\pi\sqrt{\frac{2}{3}n}}.$$ Thus, for every integer $n\geq523$, we have$$\frac{p(2n)}{p(n)}>\frac{\sqrt{6(n-1)}}{14\pi}e^{(2\sqrt{2}-\pi\sqrt{\frac{2}{3}})\sqrt{n}}>n.$$ For integers $1\leq n\leq522$, the equation \eqref{1.5} has been checked by direct computation via a computer. Hence the sequence $(p(n))_{n=1}^\infty$ satisfies the sign condition.
\end{remark}
In \S \ref{sec2}, we consider several useful lemmas. In \S \ref{sec3}, we complete the proofs of Theorems \ref{Thm1.13}, \ref{Thm1.15}, \ref{Thm1.18}, and \ref{Thm1.20}, and give some remarks.

\section{Auxiliary lemmas}\label{sec2}
First, we need the following well-known theorem of Kummer \cite{K}.
\begin{lemma}[Kummer's theorem] \label{Lem2.1}Given a prime number $p$ and integers $n\geq m\geq0$, the value of $\nu_p\left( \binom{n}{m}\right) $ is equal to the number of carries when $m$ is added to $n-m$ in base $p$. Here $\nu_p$ denotes the standard $p$-adic valuation on $\Q$.
\end{lemma}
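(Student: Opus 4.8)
The plan is to obtain Kummer's theorem as a consequence of Legendre's formula for the $p$-adic valuation of a factorial. First I would record (with a one-line proof) Legendre's formula: for every integer $k\geq0$,
$$\nu_p(k!)=\sum_{i=1}^{\infty}\left\lfloor\frac{k}{p^i}\right\rfloor=\frac{k-s_p(k)}{p-1},$$
where $s_p(k)$ denotes the sum of the base-$p$ digits of $k$. The first equality comes from counting, for each $i\geq1$, how many of $1,2,\ldots,k$ are divisible by $p^i$; the second follows by writing $k=\sum_{j\geq0}d_jp^j$ with $0\leq d_j<p$ and computing $\sum_{i\geq1}\lfloor k/p^i\rfloor=\sum_{j\geq1}d_j(p^{j-1}+\cdots+1)=\sum_{j\geq0}d_j\frac{p^j-1}{p-1}=\frac{k-s_p(k)}{p-1}$.

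Next I would apply this to $\binom{n}{m}=\frac{n!}{m!\,(n-m)!}$, giving
$$\nu_p\!\left(\binom{n}{m}\right)=\nu_p(n!)-\nu_p(m!)-\nu_p((n-m)!)=\frac{s_p(m)+s_p(n-m)-s_p(n)}{p-1}.$$
So the task reduces to showing that $\dfrac{s_p(m)+s_p(n-m)-s_p(n)}{p-1}$ equals the number of carries when $m$ is added to $n-m$ in base $p$.

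For this, set $a=m$, $b=n-m$, write the base-$p$ digits of $a,b,n$ as $a_i,b_i,n_i$, and let $\varepsilon_i\in\{0,1\}$ be the carry into position $i$, with $\varepsilon_0=0$; since $a,b\leq n<p^N$ for large $N$, all but finitely many $\varepsilon_i$ vanish, and the number of carries is $c=\sum_{i\geq1}\varepsilon_i$. Schoolbook addition gives the identity $n_i=a_i+b_i+\varepsilon_i-p\,\varepsilon_{i+1}$ for every $i\geq0$ (because $a_i+b_i+\varepsilon_i\in\{0,\ldots,2p-1\}$). Summing over $i\geq0$ and using $\sum_{i\geq0}\varepsilon_i=\sum_{i\geq0}\varepsilon_{i+1}=c$ yields $s_p(n)=s_p(a)+s_p(b)-(p-1)c$, i.e. $s_p(m)+s_p(n-m)-s_p(n)=(p-1)c$; combining this with the displayed valuation formula gives $\nu_p\binom{n}{m}=c$, as claimed.

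I do not anticipate any real obstacle, as the statement is entirely classical; the only point requiring a little care is making the carry recursion $n_i=a_i+b_i+\varepsilon_i-p\,\varepsilon_{i+1}$ precise and checking that the sums telescope (in particular that only finitely many $\varepsilon_i$ are nonzero). Alternatively, one may simply invoke Kummer's original argument \cite{K} and omit the proof.
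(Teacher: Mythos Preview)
Your proof is correct and complete. Note, however, that the paper does not actually prove Lemma~\ref{Lem2.1}: it simply states Kummer's theorem and cites \cite{K}, treating it as well-known background. Your approach via Legendre's formula and the digit-sum identity $s_p(n)=s_p(m)+s_p(n-m)-(p-1)c$ is the standard modern derivation and makes the argument self-contained, whereas the paper opts to save space by invoking the classical reference. Either choice is fine; your last line (``one may simply invoke Kummer's original argument \cite{K} and omit the proof'') is in fact exactly what the paper does.
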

The following lemma is \cite[Corollary of p.\ 490]{CT} by Helou and Terjanian.
\begin{lemma}\label{Lem2.2}
	Let $n\geq m\geq0$ be integers.
	\begin{enumerate}[(1)]
		\item For every prime $p\geq5$, we have $$\binom{np}{mp}\equiv\binom{n}{m}\pmod{p^{3+\max\left(\nu_p(m),\nu_p(n-m)\right)+\nu_p\left(\binom{n}{m}\right)}}.$$
		\item For $p=3$, we have $$\binom{3n}{3m}\equiv\binom{n}{m}\pmod{3^{2+\max\left(\nu_3(m),\nu_3(n-m)\right)+\nu_3\left(\binom{n}{m}\right)}}.$$
		\item For $p=2$, we have $$\binom{2n}{2m}\equiv\binom{n}{m}\pmod{2^{1+\max\left(\nu_2(m),\nu_2(n-m)\right)+\nu_2\left(\binom{n}{m}\right)}}.$$
	\end{enumerate}
	Here, for two integers $A,B$, and a prime $p$, the expression $A\equiv B\pmod{p^\infty}$ means that $A\equiv B\pmod{p^N}$ for all $N\in\Z^+$, or equivalently, $A=B$.
\end{lemma}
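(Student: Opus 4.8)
I would prove the three congruences uniformly by passing from the additive statement $\binom{np}{mp}\equiv\binom{n}{m}$ to a multiplicative estimate for the ratio $R:=\binom{np}{mp}/\binom{n}{m}$, bounding $\nu_p(R-1)$ via the $p$-adic logarithm. Splitting $(Np)!$ according to divisibility by $p$ gives $(Np)!=p^{N}N!\prod_{b=0}^{N-1}F(b)$, where $F(b):=\prod_{r=1}^{p-1}(bp+r)$; substituting this into $\binom{np}{mp}=\frac{(np)!}{(mp)!\,((n-m)p)!}$ yields
\[
R=\frac{\prod_{b=0}^{n-1}F(b)}{\prod_{b=0}^{m-1}F(b)\ \prod_{b=0}^{n-m-1}F(b)}.
\]
Since $F(b)/(p-1)!=\prod_{r=1}^{p-1}\bigl(1+bp/r\bigr)\in 1+p\Z_p$, the $(p-1)!$-factors cancel and $R$ is a product of factors of this shape and their inverses; hence $R\in 1+p\Z_p$, the logarithm $\log_p R$ is defined, and $\nu_p(R-1)=\nu_p(\log_p R)$. (For $p=2$ this last identity needs $\nu_2(R-1)\ge 2$; I return to that below.)

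The first step is to expand $\log_p\bigl(F(b)/(p-1)!\bigr)=\sum_{r=1}^{p-1}\log_p(1+bp/r)=\sum_{j\ge1}\tfrac{(-1)^{j-1}}{j}(bp)^{j}H_j$, where $H_j:=\sum_{r=1}^{p-1}r^{-j}\in\Z_{(p)}$; summing over the three index ranges in $R$ and interchanging the ($p$-adically rapidly convergent) sums gives
\[
\log_p R=\sum_{j\ge1}\frac{(-1)^{j-1}p^{j}}{j}\,H_j\,T_j,\qquad T_j:=S_j(n)-S_j(m)-S_j(n-m),\quad S_j(N):=\sum_{b=0}^{N-1}b^{j}.
\]
Two inputs then control the valuation of each term. \emph{Arithmetic:} $\nu_p(H_j)\ge1$ whenever $(p-1)\nmid j$ — a geometric-series computation over the cyclic group $(\Z/p\Z)^{\times}$, giving in particular $\nu_p(H_1)\ge1$ for $p\ge3$ — with the Wolstenholme improvement $\nu_p(H_1)\ge2$ for $p\ge5$; and $\nu_p(H_j)\ge0$ always. \emph{Combinatorial:} $T_j$ is divisible by both $m$ and $n-m$. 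Indeed, with $M:=\max(m,n-m)$ and $\mu:=\min(m,n-m)$ one has
\[
T_j=\sum_{b=0}^{\mu-1}\bigl((b+M)^{j}-b^{j}\bigr)=\sum_{b=0}^{M-1}\bigl((b+\mu)^{j}-b^{j}\bigr),
\]
and each summand on the left is a multiple of $M$, each on the right a multiple of $\mu$; hence $\nu_p(T_j)\ge\max\bigl(\nu_p(m),\nu_p(n-m)\bigr)$.

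Combining, the $j$-th summand of $\log_p R$ has valuation at least $\bigl(j-\nu_p(j)\bigr)+\nu_p(H_j)+\max(\nu_p(m),\nu_p(n-m))$, and a short case check ($j=1$; $j=2$; $j\ge3$, splitting on whether $p\mid j$) gives $\bigl(j-\nu_p(j)\bigr)+\nu_p(H_j)\ge e_p$, where $e_p=3$ for $p\ge5$, $e_3=2$, $e_2=1$: for $j=1$ this is exactly the Wolstenholme bound, for $j=2$ one uses $(p-1)\nmid2$ when $p\ge5$, and for $j\ge3$ one uses $j-\nu_p(j)\ge e_p$. Therefore $\nu_p(R-1)=\nu_p(\log_p R)\ge e_p+\max(\nu_p(m),\nu_p(n-m))$, and since $\binom{np}{mp}-\binom{n}{m}=\binom{n}{m}(R-1)$,
\[
\nu_p\!\left(\binom{np}{mp}-\binom{n}{m}\right)\ \ge\ \nu_p\!\left(\binom{n}{m}\right)+e_p+\max\bigl(\nu_p(m),\nu_p(n-m)\bigr),
\]
which is precisely the three asserted congruences; notice the $\nu_p(\binom{n}{m})$-term in the exponent comes for free, so Kummer's theorem is not needed here. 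The part I expect to require the most care is keeping the valuation bookkeeping uniform in $j$ — notably seeing that the Wolstenholme gain must be spent on the $j=1$ term because $T_1=\binom{n}{2}-\binom{m}{2}-\binom{n-m}{2}=m(n-m)$ contributes no more than $\max(\nu_p(m),\nu_p(n-m))$ — and, for $p=2$, the mild failure of $\nu_2(\log_2(1+x))=\nu_2(x)$ on $1+2\Z_2$: if $\nu_2(R-1)=1$ one checks directly that $m$ and $n-m$ are then both odd, so $\max(\nu_2(m),\nu_2(n-m))=0$ and the claimed bound is immediate.
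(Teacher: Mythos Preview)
The paper does not supply a proof of this lemma; it is quoted as a corollary from Helou and Terjanian's paper on Wolstenholme's theorem, so there is no in-paper argument to compare your proposal against.

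On its own merits, your $p$-adic-logarithm argument is correct. The factorisation $(Np)!=p^{N}N!\prod_{b<N}F(b)$, giving $R=\prod G(b)^{\pm1}$ with $G(b)=\prod_{r=1}^{p-1}(1+bp/r)\in 1+p\Z_p$; the expansion $\log_p R=\sum_{j\ge1}(-1)^{j-1}p^{j}H_jT_j/j$; the bound $\nu_p(T_j)\ge\max\bigl(\nu_p(m),\nu_p(n-m)\bigr)$ via the two telescoped expressions for $T_j$; and the case-check that $j-\nu_p(j)+\nu_p(H_j)\ge e_p$ (Wolstenholme at $j=1$, the vanishing of the power sum $H_2$ modulo $p$ for $p\ge5$, and $j-\nu_p(j)\ge e_p$ for $j\ge3$) all go through as you describe. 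For $p=2$ the ``direct check'' you allude to is the observation that $2b+1\equiv(-1)^b\pmod 4$, whence $R\equiv(-1)^{m(n-m)}\pmod 4$, so $\nu_2(R-1)=1$ indeed forces $m(n-m)$ odd; once that is written down, the $p=2$ case is complete. Your packaging via $\log_p$ gives a clean uniform treatment and could serve as a self-contained replacement for the citation.
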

Note that in Lemma \ref{Lem2.2}, the conclusion trivially holds when $m=0$.
\begin{lemma}\label{Lem2.3}Let $n\in\Z_{\geq0}$ and $p$ be a prime divisor of $n$. Set $m=\nu_p(n)\in\Z^+\cup\{\infty\}$. Then $$\binom{n}{\lambda p}\equiv \binom{n/p}{\lambda}\pmod{p^{\max(m,\nu_p(n-\lambda p))}},$$ for every non-negative integer $\lambda$.
\end{lemma}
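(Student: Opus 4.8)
The plan is to deduce Lemma~\ref{Lem2.3} from the Helou--Terjanian congruence (Lemma~\ref{Lem2.2}) by stripping a single factor of $p$ off of $n$. First I would dispose of trivial cases: for $n=0$ both sides equal $\binom{0}{\lambda p}=\binom{0}{\lambda}$, so assume $n\geq1$ and write $N=n/p\in\Z^+$ (possible since $p\mid n$), so that $n=pN$; if $\lambda=0$ both sides are $1$, if $\lambda>N$ both sides vanish, and if $\lambda=N$ then $n-\lambda p=0$ and the asserted congruence reads $1=1$ modulo $p^\infty$. This leaves the main case $1\leq\lambda\leq N-1$.

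In that range I would apply Lemma~\ref{Lem2.2} with its $(n,m)$ taken to be $(N,\lambda)$. Whichever of $p\geq5$, $p=3$, $p=2$ occurs, the exponent it supplies is at least the one in the $p=2$ case, so in every case
\[
	\binom{n}{\lambda p}=\binom{pN}{p\lambda}\equiv\binom{N}{\lambda}=\binom{n/p}{\lambda}\pmod{p^{\,1+\max(\nu_p(\lambda),\,\nu_p(N-\lambda))+\nu_p\binom{N}{\lambda}}}.
\]
Since $m=\nu_p(n)=1+\nu_p(N)$ and $\nu_p(n-\lambda p)=1+\nu_p(N-\lambda)$, the target modulus is $p^{\max(m,\nu_p(n-\lambda p))}=p^{\,1+\max(\nu_p(N),\,\nu_p(N-\lambda))}$, so the lemma will follow once I verify the elementary valuation inequality
\[
	\max\bigl(\nu_p(\lambda),\,\nu_p(N-\lambda)\bigr)+\nu_p\binom{N}{\lambda}\ \geq\ \max\bigl(\nu_p(N),\,\nu_p(N-\lambda)\bigr).
\]

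To prove that inequality I would set $a=\nu_p(\lambda)$, $b=\nu_p(N-\lambda)$, $c=\nu_p(N)$, and use the identity $\nu_p\binom{N}{\lambda}=\nu_p(N)-\nu_p(\lambda)+\nu_p\binom{N-1}{\lambda-1}\geq c-a$ (valid as $\lambda\geq1$) together with the ultrametric fact that $c=\min(a,b)$ whenever $a\neq b$. Then: if $c\leq b$, the left-hand side is $\geq b=\max(b,c)$; and if $c>b$, then necessarily $a=b$, so the left-hand side is $\geq b+(c-a)=c=\max(b,c)$. Combining this with the displayed congruence finishes the proof. I do not expect a genuine obstacle here; the only points needing care are to quote the weakest ($p=2$) exponent of Lemma~\ref{Lem2.2} so that a single inequality handles all primes uniformly, and to notice that it is exactly the term $\nu_p\binom{N}{\lambda}$ that saves the inequality in the borderline situation $c>b$ (for instance $\lambda=1$, $N=p$).
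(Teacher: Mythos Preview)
Your proof is correct and follows the same overall route as the paper: dispose of the boundary cases, apply Lemma~\ref{Lem2.2} with $(N,\lambda)=(n/p,\lambda)$ using the weakest ($p=2$) exponent, and then reduce to the valuation inequality $\max(\nu_p(\lambda),\nu_p(N-\lambda))+\nu_p\binom{N}{\lambda}\geq\max(\nu_p(N),\nu_p(N-\lambda))$. The only difference is in how the crucial bound $\nu_p\binom{N}{\lambda}\geq\nu_p(N)-\nu_p(\lambda)$ is extracted: you use the identity $\lambda\binom{N}{\lambda}=N\binom{N-1}{\lambda-1}$, whereas the paper invokes Kummer's theorem (Lemma~\ref{Lem2.1}) to count carries when $\nu_p(\lambda)<\nu_p(N)$; both yield the same bound and the arguments are otherwise parallel.
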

\begin{proof} When $n=0$, we have $\binom{n}{\lambda p}=0= \binom{n/p}{\lambda}$. Hence the conclusion is clear. We may assume that $n>0$.
	Write $n=l p^m$. Observe that $\gcd(l,m)=1$. If $\lambda\geq l p^{m-1}$ or $\lambda=0$, then $\binom{n}{\lambda p}=\binom{n/p}{\lambda}\in\{0,1\}$, so the conclusion holds. Now assume that $0<\lambda<l p^{m-1}$. As $1+\nu_p((n/p)-\lambda)=\nu_p(n-\lambda p)$, by Lemma \ref{Lem2.2}, it suffices to show that $$1+\max\left(\nu_p(\lambda),\nu_p(l p^{m-1}-\lambda)\right)+\nu_p\left( \binom{l p^{m-1}}{\lambda}\right) \geq m.$$ Write $\lambda=p^t q$, where $t=\nu_p(\lambda)\in\Z_{\geq0}$ and $q\in\Z^+$ with $p\nmid q$. When $t\geq m-1$, we have $$1+\max\left(\nu_p(\lambda),\nu_p(l p^{m-1}-\lambda)\right)+\nu_p\left( \binom{l p^{m-1}}{\lambda}\right) \geq 1+\nu_p(\lambda)=1+t\geq m.$$ If $t\leq m-2$, then $\nu_p(n/p)=m-1>m-2\geq t=\max(\nu_p(\lambda),\nu_p((n/p)-\lambda))$, so by Lemma \ref{Lem2.1} we see that $$\nu_p\left( \binom{l p^{m-1}}{\lambda}\right)=\nu_p\left( \binom{l p^{m-1}}{q p^t}\right)\geq m-1-t.$$ Thus, $$1+\max\left(\nu_p(\lambda),\nu_p(l p^{m-1}-\lambda)\right)+\nu_p\left( \binom{l p^{m-1}}{\lambda}\right) \geq 1+t+m-1-t=m.$$
\end{proof}
The following lemma gives a sufficient condition for the sign condition.
\begin{lemma}\label{Lem2.4}
	Let $(b(n))_{n=1}^\infty$ be a sequence of non-negative real numbers. Assume that there is a constant $C\geq 1.221$ such that $b(n+1)\geq C b(n)$ for every $n\geq 1$. Then $(\mu*b)(n)\geq0$ for every $n\geq1$.
\end{lemma}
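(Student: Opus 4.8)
My plan is to reduce the sign condition to the single scalar inequality
\[
f(n):=\sum_{\substack{d\mid n\\ \mu(d)=-1}}C^{n/d-n}\;\le\;1\qquad(n\ge 1),
\]
and then to prove this by a short case analysis on $\omega(n)$, the number of distinct prime divisors of $n$. For the reduction, I would first note that $C\ge 1.221>1$ makes $(b(n))$ non-decreasing and, by iterating $b(n+1)\ge Cb(n)$, that $b(j)\le C^{j-n}b(n)$ for all integers $1\le j\le n$. Writing $(\mu*b)(n)=\sum_{d\mid n}\mu(d)b(n/d)$ and using that $\mu(d)\ne 0$ forces $d$ squarefree with $\mu(d)=(-1)^{\omega(d)}$, I would keep the term $d=1$, discard the nonnegative terms having $\mu(d)=1$ and $d>1$, and estimate each remaining term by $b(n/d)\le C^{n/d-n}b(n)$; this gives $(\mu*b)(n)\ge b(n)\bigl(1-f(n)\bigr)$, so $f(n)\le 1$ suffices. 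Since the $d$ with $\mu(d)=-1$ are exactly the squarefree divisors of $n$ with an odd number of prime factors, there are $2^{\omega(n)-1}$ of them and each satisfies $d\ge 2$.

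The cases $\omega(n)=1$ and $\omega(n)\ge 3$ are routine. If $n=p^{a}$, the only contribution is $C^{-p^{a-1}(p-1)}\le C^{-1}<1$. If $\omega(n)\ge 3$, then $n\ge 2\cdot 3\cdot 5=30$ and $n\ge p_1\cdots p_{\omega(n)}\ge 2^{\omega(n)}$, so $2^{\omega(n)-1}\le n/2$; as every term is at most $C^{-n/2}$ (because $d\ge 2$), $f(n)\le (n/2)C^{-n/2}$, and since $x\mapsto xC^{-x}$ is decreasing for $x>1/\ln C\approx 5$ while $n/2\ge 15$, this is at most $15\,C^{-15}<1$, the last step being the numerical fact $C^{15}>15$.

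The case $\omega(n)=2$ is the heart of the argument and the only place the constant $1.221$ is used. If $p<q$ are the two prime divisors and $n=p^{a}q^{b}$, then $f(n)=C^{-n(1-1/p)}+C^{-n(1-1/q)}$, and since $n(1-1/p)=p^{a-1}q^{b}(p-1)\ge (p-1)q\ge 3$ and $n(1-1/q)=p^{a}q^{b-1}(q-1)\ge p(q-1)\ge 4$, together with $C>1$ this yields $f(n)\le C^{-3}+C^{-4}=(C+1)/C^{4}$. The proof then comes down to checking $(C+1)/C^{4}\le 1$, i.e.\ $C^{4}-C-1\ge 0$, for $C\ge 1.221$: the map $C\mapsto C^{4}-C-1$ is increasing on $[1,\infty)$ and is (just barely) non-negative at $C=1.221$, the threshold being the positive real root of $C^{4}-C-1$, which is precisely why the hypothesis is stated with this constant. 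I expect the main obstacle to be not any single computation but organizing this last case so as to see that the extremal $n$ is $n=6$, so that the essentially sharp inequality $C^{-3}+C^{-4}\le 1$ is exactly what governs the admissible constant.
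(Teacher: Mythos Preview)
Your proof is correct and follows essentially the same approach as the paper: both split into cases according to $\omega(n)$, identify $\omega(n)=2$ with $n=6$ as the extremal case reducing to $C^{-3}+C^{-4}\le 1$ (equivalently $C^4\ge C+1$, which is exactly the paper's check $1-1.221^{-3}-1.221^{-4}\ge 0$), and for $\omega(n)\ge 3$ (i.e.\ $n\ge 30$) verify the numerical bound $1.221^{15}>15$. The only cosmetic difference is that for $n\ge 30$ the paper crudely bounds the negative contribution by $\sum_{d=1}^{\lfloor n/2\rfloor}b(d)\le m\,b(m)$ and compares with $b(n)\ge C^{m}b(m)$, whereas you count only the $2^{\omega(n)-1}\le n/2$ terms with $\mu(d)=-1$ and normalize by $b(n)$; both routes land on the same inequality.
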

\begin{proof}
	Set $f=\mu*b$. By assumption we have $b(n+k)\geq C^k b(n)$ for every $n\geq1$ and $k\geq0$. In particular, $(b(n))_{n=1}^\infty$ is non-decreasing and non-negative. Trivially, $f(1)=b(1)\geq0$. For every prime number $p$ and $k\in\Z^+$, since $p^k>p^{k-1}$, we have $$f(p^k)=b(p^k)-b(p^{k-1})\geq 0.$$ For every pair of distinct prime numbers $p_1\neq p_2$ and $k_1, k_2\in\Z^+$, we have 
	\begin{align*}
		f\left(p_1^{k_1} p_2^{k_2}\right)&=b\left(p_1^{k_1} p_2^{k_2}\right)-b\left(p_1^{k_1-1} p_2^{k_2}\right)-b\left(p_1^{k_1} p_2^{k_2-1}\right)+b\left(p_1^{k_1-1} p_2^{k_2-1}\right)\\
		&\geq b\left(p_1^{k_1} p_2^{k_2}\right) - C^{p_1^{k_1-1} p_2^{k_2}-p_1^{k_1 } p_2^{k_2}}b\left(p_1^{k_1} p_2^{k_2}\right) - C^{p_1^{k_1} p_2^{k_2-1}-p_1^{k_1} p_2^{k_2}}b\left(p_1^{k_1} p_2^{k_2}\right)+0\\
		&\geq \left(1-C^{3-3\times2}-C^{2-3\times2}\right)b\left(p_1^{k_1} p_2^{k_2}\right)\\
		&\geq\left(1-1.221^{-3}-1.221^{-4}\right)b\left(p_1^{k_1} p_2^{k_2}\right)\\
		&\geq 0.
	\end{align*}
	The minimum positive integer with at least three distinct prime divisor is $30=2\times3\times5$, so it suffices to show that $f(n)\geq0$ for $n\geq 30$. Assume that $n\geq 30$, and set $m=\lfloor \frac{n}{2}\rfloor\geq15$. Then
	\begin{align*}
		f(n)&=\sum\limits_{d\mid n} \mu\left(\frac{n}{d}\right)b(d)=b(n)+\sum\limits_{d\mid n,d\neq n} \mu\left(\frac{n}{d}\right)b(d)\geq b(n)-\sum\limits_{d\mid n, d\neq n}b(d)\\
		&\geq b(n)-\sum\limits_{d=1}^{m}b(d)\geq C^mb(m)-\sum\limits_{d=1}^{m}C^{d-m}b(m)\geq (C^m-m)b(m)\\
		&\geq (1.221^m-m)b(m)\geq 0,
	\end{align*}
	since $m\geq 15$.
\end{proof}
\begin{remark}\label{Rem2.5}
	The converse of Lemma \ref{Lem2.4} is false, in general. For example, the trivial sequence $(1)_{n=1}^\infty$ provides a counterexample.
	Note that the number $1.221$ in the statement can be replaced by the unique positive root $x_0=1.220744\cdots$ of the equation $x^4=x+1$. Puri \cite{P} observed that if $(b(n))_{n=1}^{\infty}$ is a non-decreasing sequence of non-negative real numbers with
	\begin{equation}\label{2.6}
		b(2n)\geq nb(n),
	\end{equation}
	for all $n\geq1$, then $(\mu*b)(n)\geq0$ for all $n\geq1$, cf.\ \cite[proof of Lemma 8]{MiW}. Note that Lemma \ref{Lem2.4} cannot be deduced directly from the observation of Puri since $1.221^n<n$ for $n\in\{2,3,\ldots,12\}$, and vice versa.
\end{remark}
We consider the Dold condition now, which has an equivalent statement as follows.
\begin{lemma}\label{lemDold}
	Let $(V(n))_{n=1}^\infty$ be a sequence of integers. Then the following conditions are equivalent.
	\begin{enumerate}[(1)]
		\item \label{lemDold1} for every $n,m\in\Z^+$, and every prime number $p$, we have $$V(np^m)\equiv V(np^{m-1})\pmod{p^m};$$
		\item $(V(n))_{n=1}^\infty$ satisfies the Dold condition, i.e., $(\mu*V)(n)\equiv 0\pmod{n}$, for every $n\geq1$.
	\end{enumerate}
\end{lemma}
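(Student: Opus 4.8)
The plan is to prove the two implications separately, in each direction reducing the divisibility statement to a prime at a time and passing between $V$ and $\mu*V$ by M\"obius inversion. The basic observation is that for a fixed $n$ the Dold relation $n\mid(\mu*V)(n)$ holds if and only if $p^{\nu_p(n)}\mid(\mu*V)(n)$ for every prime $p\mid n$; and that $\mathbf 1*\mu$ is the Dirichlet identity, so $\mathbf 1*(\mu*V)=V$, i.e.\ $V(n)=\sum_{d\mid n}(\mu*V)(d)$.

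For (\ref{lemDold1})$\Rightarrow$(2) I would fix $n$ and a prime $p$, set $m=\nu_p(n)$, and write $n=p^mk$ with $p\nmid k$. Every divisor of $n$ is $p^je$ with $0\le j\le m$ and $e\mid k$, and since $\gcd(p^{m-j},k/e)=1$ one has $\mu(p^{m-j}k/e)=\mu(p^{m-j})\mu(k/e)$, which vanishes unless $j\in\{m-1,m\}$. Hence
\[
(\mu*V)(n)=\sum_{e\mid k}\mu(k/e)\bigl(V(p^me)-V(p^{m-1}e)\bigr).
\]
Now hypothesis (\ref{lemDold1}), applied with the integer $e$ in the role of $n$, makes every bracket divisible by $p^m$, so $p^m\mid(\mu*V)(n)$; letting $p$ range over the prime divisors of $n$ gives the Dold condition.

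For (2)$\Rightarrow$(\ref{lemDold1}) I would use that (2) says exactly that $b(d):=(\mu*V)(d)/d\in\Z$ for all $d\ge1$, whence $V(n)=\sum_{d\mid n}(\mu*V)(d)=\sum_{d\mid n}d\,b(d)$. Fixing $n,m\ge1$ and a prime $p$, write $n=p^a\ell$ with $p\nmid\ell$; the divisors of $np^m=p^{a+m}\ell$ that do not already divide $np^{m-1}=p^{a+m-1}\ell$ are precisely the $p^{a+m}e$ with $e\mid\ell$, so
\[
V(np^m)-V(np^{m-1})=\sum_{e\mid\ell}p^{a+m}e\,b\bigl(p^{a+m}e\bigr),
\]
and each term is divisible by $p^{a+m}$, hence by $p^m$, which is the desired congruence.

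I do not expect a genuine obstacle: once the reduction to prime powers and the inversion formula are in hand, each direction is divisor bookkeeping. The points to watch are, in the first implication, keeping the two appeals to multiplicativity of $\mu$ straight, and, in the second, correctly isolating the divisors of $np^m$ that are new relative to $np^{m-1}$ --- it matters that the extra factor is $p^{a+m}$ rather than merely $p^m$, so the divisibility has room to spare whenever $p\mid n$. I may also add the remark that $(b(n))$ is the orbit-counting sequence in the dynamical picture and that $V(n)=\sum_{d\mid n}d\,b(d)$ is the familiar identity expressing $\#\mathrm{Fix}(T^n)$ in terms of the numbers of periodic orbits by length.
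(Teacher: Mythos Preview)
Your proof is correct. The direction (1)$\Rightarrow$(2) is essentially identical to the paper's: the paper also writes $n=p_1^{m_1}n_1$ with $p_1\nmid n_1$ and derives the same formula
\[
(\mu*V)(n)=\sum_{d\mid n_1}\mu(d)\Bigl(V\bigl(\tfrac{n_1}{d}p_1^{m_1}\bigr)-V\bigl(\tfrac{n_1}{d}p_1^{m_1-1}\bigr)\Bigr),
\]
then applies (1) termwise and runs over the prime divisors of $n$.

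Your direction (2)$\Rightarrow$(1), however, differs from the paper's. The paper first reduces to $\gcd(n,p)=1$ and then argues by induction on $n$, using the displayed formula above (with $np^m$ in place of $n$) together with the inductive hypothesis to isolate the single term $V(np^m)-V(np^{m-1})$ modulo $p^m$. You instead invert once and for all, setting $b(d)=(\mu*V)(d)/d\in\Z$ and writing $V(N)=\sum_{d\mid N}d\,b(d)$, so that the difference $V(np^m)-V(np^{m-1})$ is literally a sum of integer multiples of $p^{a+m}$. This is cleaner: no induction, and it yields the sharper divisibility $p^{\nu_p(n)+m}\mid V(np^m)-V(np^{m-1})$ for free, which in particular recovers the paper's remark that one may restrict (1) to $n$ coprime to $p$ without loss. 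The paper's inductive route, on the other hand, stays entirely at the level of $V$ and $\mu$ and never needs to name the orbit-count sequence $b$; both arguments are short, but yours is the more transparent of the two.
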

\begin{proof}
	Set $g=\mu*V$. Given an arbitrary integer $n\geq2$, write $n=p_1^{m_1}p_2^{m_2}\cdots p_l^{m_l}$, where $p_1,\ldots,p_l$ are pairwise distinct primes, and $l,m_1,\ldots,m_l\in\Z^+$. Set $n_1=p_2^{m_2}\cdots p_l^{m_l}$. Here $n_1=1$ when $l=1$. Then, we have
	\begin{align}
		\begin{aligned}\label{eqn2}
			g(n)&=\sum\limits_{d\mid n} \mu(d)V\left(\frac{n}{d}\right)=\sum\limits_{d\mid n_1} \left( \mu(d)V\left(\frac{n}{d}\right)+\mu(dp_1)V\left(\frac{n}{dp_1}\right)\right) \\
			&=\sum\limits_{d\mid n_1} \mu(d)\left( V\left(\frac{n_1}{d} p_1^{m_1}\right)+\mu(p_1)V\left(\frac{n_1}{d} p_1^{m_1-1}\right)\right)\\
			&=\sum\limits_{d\mid n_1} \mu(d)\left(V\left(\frac{n_1}{d} p_1^{m_1}\right)-V\left(\frac{n_1}{d} p_1^{m_1-1}\right)\right).
		\end{aligned}
	\end{align}
	
	First assume that for every $n,m\in\Z^+$, and every prime number $p$, we have $$V(np^m)\equiv V(np^{m-1})\pmod{p^m}.$$ We show that the Dold condition holds. The congruence $g(1)\equiv 0$ (mod $1$) trivially holds. For an arbitrary integer $n=p_1^{m_1}p_2^{m_2}\cdots p_l^{m_l}\geq2$ as above, we have $$g(n)=\sum\limits_{d\mid n_1} \mu(d)\left(V\left(\frac{n_1}{d} p_1^{m_1}\right)-V\left(\frac{n_1}{d} p_1^{m_1-1}\right)\right)\equiv0\pmod{p_1^m}$$ by \eqref{eqn2}. Similarly, we have $g(n)\equiv0$ (mod $p_j^{m_j}$) for $1< j\leq l$. Thus, we get $g(n)\equiv0$ (mod $n$), i.e., the Dold condition is verified. 
	
	Now assume that for every $n\in\Z^+$, we have $g(n)\equiv0$ (mod $n$). We will show that for every $n,m\in\Z^+$, and every prime number $p$, we have
	\begin{align}\label{eqDold}
		V(np^m)\equiv V(np^{m-1})\pmod{p^m}.
	\end{align} Write $n=n_1p^s$, where $n_1\in\Z^+$ and $s\in\Z_{\geq0}$ with $\gcd(n_1,p)=1$. Note that the congruence $$V(n_1p^{m+s})\equiv V(n_1p^{m+s-1})\pmod{p^{m+s}}$$ implies the congruence \eqref{eqDold}, hence it suffices to show that \eqref{eqDold} holds in the case $\gcd(n,p)=1$. We use induction on $n\in\Z^+$. When $n=1$, the congruence \eqref{eqDold} follows from $$V(p^m)-V(p^{m-1})=g(p^m)\equiv0\pmod{p^m}.$$ Let $n$ be an arbitrary integer at least $2$ and assume that \eqref{eqDold} hold for all smaller $n$ (with $\gcd(n,p)=1$). Thus, by \eqref{eqn2}, the inductive hypothesis, and the Dold condition, we have
	\begin{align*}
		0&\equiv g(np^m)=\sum\limits_{d\mid n} \mu(d)\left(V\left(\frac{n}{d} p^{m}\right)-V\left(\frac{n}{d} p^{m-1}\right)\right)\\
		&\equiv\mu(1)\left(V\left(n p^{m}\right)-V\left(n p^{m-1}\right)\right)+\sum\limits_{d\mid n,d>1}\mu(d)\cdot0\\
		&=V\left(n p^{m}\right)-V\left(n p^{m-1}\right)\pmod{p^{m}}.
	\end{align*}
	(Here $np^m,n,p,m$ correspond to $n,n_1,p_1,m_1$ in \eqref{eqn2}, respectively.) Therefore, we have showed that \eqref{eqDold} holds for every $n,m\in\Z^+$, and every prime $p$.
\end{proof}
\begin{remark}
	From the proof of Lemma \ref{lemDold}, we can require that $n$ and $p$ are coprime in condition (\ref{lemDold1}).
\end{remark}
\section{Proofs of theorems}\label{sec3}
\begin{proof}[Proof of Theorem \ref{Thm1.13}] Fix $s\in\Z_{\geq0}$ and $r\in\Z^+$. Write $V(n)=A(n,r,s)$, $n\geq1$, for simplicity of notation. Set $g=\mu*V$. For $n=1$, it is clear that $g(1)=V(1)=1+2^s$ is divisible by $1$. Given arbitrary $m,n\in\Z^+$, and an arbitrary prime number $p$, we consider the difference $V(np^m)-V(np^{m-1})$ modulo $p^m$. Let $0\leq k\leq n p^m$ be an integer. If $p\nmid k$, then we have $$\binom{n p^m}{k}\equiv 0\pmod{p^m}$$ by Lemma \ref{Lem2.1}. If $p\mid k$, then $$\binom{n p^m}{k}\equiv \binom{n p^{m-1}}{k/p}\pmod{p^{m}},\binom{n p^m+k}{k}\equiv \binom{n p^{m-1}+k/p}{k/p}\pmod{p^{m}}$$ by Lemma \ref{Lem2.3}, since $\nu_p(np^m+k-k)\geq m$. Therefore, 
	\begin{align*}
		V(np^m)-V(np^{m-1})&=\sum_{k=0}^{np^m} \binom{np^m}{k}^r\binom{np^m+k}{k}^s-\sum_{k=0}^{np^{m-1}} \binom{np^{m-1}}{k}^r\binom{np^{m-1}+k}{k}^s\\
		&\equiv0+\sum_{\lambda=0}^{np^{m-1}} \binom{np^m}{\lambda p}^r\binom{np^m+\lambda p}{\lambda p}^s-\sum_{k=0}^{np^{m-1}}\binom{np^{m-1}}{k}^r\binom{np^{m-1}+k}{k}^s\\
		&\equiv\sum_{\lambda=0}^{np^{m-1}} \binom{np^{m-1}}{\lambda}^r\binom{np^{m-1}+\lambda}{\lambda}^s-\sum_{k=0}^{np^{m-1}} \binom{np^{m-1}}{k}^r\binom{np^{m-1}+k}{k}^s\\
		&=0\pmod{p^m}.
	\end{align*}
	We get that $$V(np^m)-V(np^{m-1})\equiv 0\pmod{p^m}.$$ By Lemma \ref{lemDold}, we see that $(V(n))_{n=1}^\infty$ satisfies the Dold condition.
	
	It suffices to show that $g(n)\geq0$ for all $n\in\Z^+$. For every $n\geq1$, we have
	\begin{align*}
		V(n+1)&=\sum_{k=0}^{n+1}\binom{n+1}{k}^r\binom{n+1+k}{k}^s=1+\sum_{k=1}^{n+1}\binom{n+1}{k}^r\binom{n+1+k}{k}^s\\
		&=1+\sum_{k=1}^{n+1}\left( \binom{n}{k}+\binom{n}{k-1}\right) ^r\left( \binom{n+k}{k}+\binom{n+k}{k-1}\right)^s\\
		&\geq 1+\sum_{k=1}^{n+1}\left( \binom{n}{k}^r\binom{n+k}{k}^s+\binom{n}{k-1}^r\binom{n+k}{k-1}^s\right)\\
		&\geq 1+\sum_{k=1}^{n+1}\left(\binom{n}{k}^r\binom{n+k}{k}^s+\binom{n}{k-1}^r\binom{n+k-1}{k-1}^s\right)\\
		&=2\sum_{k=0}^{n}\binom{n}{k}^r\binom{n+k}{k}^s=2V(n).
	\end{align*}
	As $2>1.221$, the Dold condition $g(n)\geq0$ for all $n\in\Z^+$ follows from Lemma \ref{Lem2.4}. Therefore, the sequence $(V(n))_{n=1}^\infty$ is realizable.
\end{proof}
\begin{remark}\label{Rem3.1}
	In the proof of Theorem \ref{Thm1.13}, we have showed that $A(n+1,r,s)\geq2A(n,r,s)$, for all $n,s\geq0$, and $r\geq1$. For some specified values of $r$ and $s$, some stronger results are known. Recall that $A(n)=A(n,2,2),\beta(n)=A(n,2,1),D(n)=A(n,1,1)$, and $f^{(r)}(n)=A(n,r,0)$. Xia and Yao \cite[Corollary 5]{XY} proved that the sequence $(A(n))_{n=0}^\infty$ is {\it strictly log-convex}, i.e., $$\frac{A(n+1)}{A(n)}>\frac{A(n)}{A(n-1)},n\geq1.$$ Hence, for all $n\geq1$, we have $$\frac{A(n+1)}{A(n)}\geq\frac{A(2)}{A(1)}=\frac{73}{5}>2.$$Xia and Yao also showed that the sequence $(D(n))_{n=0}^\infty$ is strictly log-convex \cite[Corollary 6]{XY}, hence we have $$\frac{D(n+1)}{D(n)}\geq\frac{D(2)}{D(1)}=\frac{13}{3}>2,n\geq1.$$ The result \cite[Theorem 5.2]{CX} of Chen and Xia implies that the sequence $(\beta(n))_{n=0}^\infty$ is strictly log-convex, hence we get $$\frac{\beta(n+1)}{\beta(n)}\geq\frac{\beta(2)}{\beta(1)}=\frac{19}{3}>2,n\geq1.$$ The result \cite[Corollary 4.3]{D} of Do\v{s}li\'c implies that for $r\in\{3,4\}$, the sequence $(f^{(r)}(n))_{n=0}^\infty$ is {\it log-convex}, i.e., $$f^{(r)}(n+1)f^{(r)}(n-1)\geq f^{(r)}(n)^2,n\geq1.$$
\end{remark}
\begin{proof}[Proof of Theorem \ref{Thm1.15}]
	Fix $s,t\in\Z_{\geq0}$, and $r\in\Z^+$. Write $W(n)=D(n,r,s,t)$, $n\geq1$, for simplicity of notation. Set $g=\mu*W$. For $n=1$, it is clear that $g(1)=W(1)=2^t+2^s$ is divisible by $1$. Given $m,n\in\Z^+$, and an arbitrary prime number $p$, we consider the difference $A:=W(np^m)-W(np^{m-1})$ modulo $p^m$. Let $0\leq k\leq np^m$ be an integer. If $p\nmid k$, then we have
	\begin{align}\label{3.7}
		\binom{np^m}{k}\equiv 0\pmod{p^m}
	\end{align}
	by Lemma \ref{Lem2.1}. Now assume that $p\mid k$ and write $k=lp^u$, where $(l,u)=(0,m)$ when $k=0$, and $l,u\in\Z^+$ with $\gcd(l,p)=1$ when $k>0$. We claim that
	\begin{align}\label{3.10}
		\binom{np^m}{k}^r\binom{2k}{k}^s\binom{2(np^m-k)}{np^m-k}^t\equiv\binom{np^{m-1}}{lp^{u-1}}^r\binom{2lp^{u-1}}{lp^{u-1}}^s\binom{2(np^{m-1}-lp^{u-1})}{np^{m-1}-lp^{u-1}}^t\pmod{p^m}.
	\end{align}
	Note that Lemma \ref{Lem2.3} implies that $$\binom{np^m}{k}^r\equiv\binom{np^{m-1}}{lp^{u-1}}^r\pmod{p^m}.$$By Lemma \ref{Lem2.1}, we have $\nu_p\left(\binom{2M}{M}\right)\geq\delta_{2,p}$ for all $M\in\Z^+$, where $\delta_{2,p}$ is the Kronecker symbol. From Lemma \ref{Lem2.2}, we see that
	\begin{align}\label{3.11}
		\binom{2k}{k}&\equiv\binom{2lp^{u-1}}{lp^{u-1}}\pmod{p^{1+u}},\\ \binom{2(np^m-k)}{np^m-k}&\equiv\binom{2(np^{m-1}-lp^{u-1})}{np^{m-1}-lp^{u-1}}\pmod{p^{1+\min(u,m)}}.\label{3.12}
	\end{align}
	Thus, the congruence \eqref{3.10} holds if $u\geq m-1$. Assume that $1\leq u\leq m-2$. By the definition of $u$, we have $0<k<np^m$. From Lemma \ref{Lem2.1}, we get that
	\begin{equation}\label{3.13}
		v_p\left(\binom{np^m}{k}\right)=v_p\left(\binom{np^{m-1}}{k/p}\right)\geq m-u.
	\end{equation}
	Set $N=\max(0,m-r(m-u))$. To show \eqref{3.10}, it suffices to prove the congruences 
	\begin{align}\label{3.14}\binom{2k}{k}\equiv\binom{2lp^{u-1}}{lp^{u-1}}\pmod{p^N} \text{ and } \binom{2(np^m-k)}{np^m-k}\equiv\binom{2(np^{m-1}-lp^{u-1})}{np^{m-1}-lp^{u-1}}\pmod{p^N},
	\end{align}
	which follows from \eqref{3.11} and \eqref{3.12}, since $r\geq1$ and $1\leq u\leq m-2$ imply $$N=\max(0,ru-(r-1)m)< 1+u=1+\min(u,m).$$By \eqref{3.7} and \eqref{3.10}, the difference $A=W(np^m)-W(np^{m-1})$ satisfies
	\begin{align*}
		A&=\sum_{k=0}^{np^m} \binom{np^m}{k}^r\binom{2k}{k}^s\binom{2(np^m-k)}{np^m-k}^t-\sum_{k=0}^{np^{m-1}} \binom{np^{m-1}}{k}^r\binom{2k}{k}^s\binom{2(np^{m-1}-k)}{np^{m-1}-k}^t\\
		&\equiv0+\sum_{\lambda=0}^{np^{m-1}} \binom{np^m}{\lambda p}^r\binom{2\lambda p}{\lambda p}^s\binom{2(np^m-\lambda p)}{np^m-\lambda p}^t-\sum_{k=0}^{np^{m-1}} \binom{np^{m-1}}{k}^r\binom{2k}{k}^s\binom{2(np^{m-1}-k)}{np^{m-1}-k}^t\\
		&\equiv\sum_{\lambda=0}^{np^{m-1}} \binom{np^{m-1}}{\lambda}^r\binom{2\lambda}{\lambda}^s\binom{2(np^{m-1}-\lambda)}{np^{m-1}-\lambda}^t-\sum_{k=0}^{np^{m-1}}\binom{np^{m-1}}{k}^r\binom{2k}{k}^s\binom{2(np^{m-1}-k)}{np^{m-1}-k}^t\\
		&=0\pmod{p^m}.
	\end{align*}
	We get $W(np^m)\equiv W(np^{m-1})
	\pmod{p^m}$. Hence, the Dold condition for $(W(n))_{n=1}^{\infty}$ follows from Lemma \ref{lemDold}.
	
	We consider the sign condition. It is clear that the sequence $\left(\binom{2n}{n}\right)_{n=0}^\infty$ is strictly increasing by looking the ratios of two adjacent terms. For all $n\geq1$, we have
	\begin{align*}
		W(n+1)
		&=\sum_{k=0}^{n+1} \binom{n+1}{k}^r\binom{2k}{k}^s\binom{2(n+1-k)}{n+1-k}^t\\
		&=\binom{2(n+1)}{n+1}^t+\sum_{k=1}^{n+1} \left( \binom{n}{k}+\binom{n}{k-1}\right) ^r\binom{2k}{k}^s\binom{2(n+1-k)}{n+1-k}^t\\
		&\geq\binom{2(n+1)}{n+1}^t+\sum_{k=1}^{n+1} \left( \binom{n}{k}^r+\binom{n}{k-1}^r\right) \binom{2k}{k}^s\binom{2(n+1-k)}{n+1-k}^t\\
		&\geq\binom{2n}{n}^t+\sum_{k=1}^{n} \binom{n}{k}^r \binom{2k}{k}^s\binom{2(n-k)}{n-k}^t+\sum_{k=0}^{n}\binom{n}{k}^r\binom{2k}{k}^s\binom{2(n-k)}{n-k}^t\\
		&=2\sum_{k=0}^{n} \binom{n}{k}^r \binom{2k}{k}^s\binom{2(n-k)}{n-k}^t=2W(n).
	\end{align*}
	As $2>1.221$ and $W(n)>0$ for all $n\geq1$, the sign condition for $(W(n))_{n=1}^{\infty}$ holds by Lemma \ref{Lem2.4}. Thus, the sequence $(W(n))_{n=1}^\infty$ is realizable.
\end{proof}
\begin{remark}\label{Rem3.2}For every $r\in\Z^+$ and $s,t,u\in\Z_{\geq0}$, set $$C(n,r,s,t,u)=\sum_{k=0}^n \binom{n}{k}^r\binom{n+k}{k}^s\binom{2k}{k}^t\binom{2(n-k)}{n-k}^u,n\geq1.$$
	A very similar argument proves that the sequence $(C(n,r,s,t,u))_{n=1}^\infty$ is realizable, which generalizes both Theorem \ref{Thm1.13} and Theorem \ref{Thm1.15}.
\end{remark}
\begin{proof}[Proof of Theorem \ref{Thm1.18}] Fix $r\in\Z^+$ and $s,t,u\in\Z_{\geq0}$. As in Remark \ref{Rem3.2}, applying Lemma \ref{lemDold}, the combination of the proof of Theorem \ref{Thm1.15} and Lemma \ref{lemDold} with small modification implies the Dold condition for $(T(n,r,s,t,u)=:X(n))_{n=1}^\infty$. The only differences are that in many places $k$ should be replaced by $2k$, so the formula (\ref{3.13}) should have a correction term, i.e., (\ref{3.13}) should be $$v_p\left(\binom{np^m}{2k}\right)=v_p\left(\binom{np^{m-1}}{2k/p}\right)\geq m-u-\delta_{2,p},$$ and that $N$ should be $\max(0,m-r(m-u-\delta_{2,p}))$. Since $m-r(m-u-1)\leq 1+u$ under the conditions $r\geq1$ and $1\leq u\leq m-2$, we see that the proof is still valid.
	
	For the sign condition, we first consider the easier case of central trinomial coefficients, i.e., the case when $(r,s,t,u)=(1,0,1,0)$. We will use the observation of Puri stated as in Remark \ref{Rem2.5}. Fix $n\in\Z^+$. For a polynomial $h(x)$ and $k\in\Z_{\geq0}$, let $[x^k]h(x)$ denote the coefficient of $x^k$ in $h(x)$. Then $$T(2n)=[x^{2n}](x^2+x+1)^{2n}\geq([x^n](x^2+x+1)^n)^2=T(n)^2.$$
	Clearly, the sequence of central trinomial coefficients is increasing. By Remark \ref{Rem2.5}, it suffices to show that $T(n)\geq n$, for every $n\geq1$. For $n\in\{1,2,3\}$, $T(n)\geq n$ trivially holds. When $n\geq4$, we have $$T(n)\geq1+\binom{n}{2\lfloor\frac{n}{2}\rfloor}\binom{2\lfloor\frac{n}{2}\rfloor}{\lfloor\frac{n}{2}\rfloor}\geq1+2\left\lfloor\frac{n}{2}\right\rfloor\geq n.$$ Therefore, the sequence $(T(n))_{n=1}^\infty$ satisfies the sign condition.
	
	Now we consider the sign condition for the general case. We will prove that
	\begin{equation}\label{3.15}
		X(n+1)\geq \frac{7}{5}X(n),n\geq1,
	\end{equation}from which the sign condition follows, applying Lemma \ref{Lem2.4}.
	
	Fix $n\in\Z^+$. When $n=1$, we have $X(2)=6^u+3^s 2^{t+u}\geq 2\times2^u=2X(1)$. When $n=2$, we have
	\begin{align*}
		X(3)&=20^u+3^r4^s2^t6^u\geq2\times6^u-\delta_{u,0}+3\times4^s2^t6^u\geq2\times6^u-\delta_{u,0}+1+2\times4^s2^t6^u\\
		&\geq2(6^u+4^s2^t6^u)\geq2(6^u+3^s2^{t+u})=2X(2).
	\end{align*}
	Similarly, it is easy to check that $X(4)\geq2X(3)$ and $X(5)\geq2X(4)$ hold. Now assume that $n\geq5$. (Here $n\geq5$ ensures that $n_1-1\geq1$ below.)
	
	For integers $m,l\geq0$, set $$f_{m}(l)=\binom{m+l}{l}^s\binom{2l}{l}^t,$$ which is non-decreasing with respect to both $m$ and $l$.
	
	We give a lower bound of $X(n+1)-X(n)=:B$ as follows. As in the proofs of Theorems \ref{Thm1.13} and \ref{Thm1.15}, from the identity $\binom{m+1}{l+1}=\binom{m}{l}+\binom{m}{l+1}$, we get $\binom{m+1}{l+1}^v\geq\binom{m}{l}^v+\binom{m}{l+1}^v$, where $m\geq l\geq 0$ and $v\geq1$ are integers. Set $n_0=\lfloor\frac{n+1}{2}\rfloor\in\left[3,n-2\right] $. Since the central binomial coefficient $\binom{2m}{m}$ is strictly increasing and $\binom{n+1+k}{k}\geq\binom{n+k}{k}$, we have
	\begin{align}
		\begin{aligned}\label{eqdiff}
			B&=\binom{2(n+1)}{n+1}^u+\sum_{k=1}^{n_0}\binom{n+1}{2k}^r f_{n+1}(k)\binom{2(n+1-k)}{n+1-k}^u-X(n)\\
			&\geq\binom{2n}{n}^u+\sum_{k=1}^{n_0}\left( \binom{n}{2k}^r+\binom{n}{2k-1}^r\right) f_{n}(k)\binom{2(n+1-k)}{n+1-k}^u-X(n)\\
			&\geq\sum_{k=1}^{n_0}\binom{n}{2k-1}^r f_{n}(k)\binom{2(n+1-k)}{n+1-k}^u.
		\end{aligned}
	\end{align}
	
	Using \eqref{eqdiff}, we will give a lower bound of $X(n+1)-2X(n)$. 
	Set $n_1=\left\lfloor\frac{n+3}{4}\right\rfloor=\lceil\frac{n}{4}\rceil\in\left[ 2,n-3\right]$. When $1\leq k\leq n_1$, we have $$2k-1\leq2\left\lceil\frac{n}{4}\right\rceil-1\leq\frac{n+1}{2}.$$ When $n_1+1\leq k\leq n_0$, we have $$2k-1\geq2\left\lceil\frac{n}{4}\right\rceil+1\geq\frac{n}{2}+1.$$Set
	\begin{align*}
		\Delta^\prime=\left( \binom{n}{2n_1-1}^r f_{n}(n_1)-\binom{n}{2n_1-2}^r f_{n}(n_1-1)\right) \binom{2(n+1-n_1)}{n+1-n_1}^u
	\end{align*}
	and
	\begin{align*}
		\Delta=\binom{n}{2n_1-2}^r \left(f_{n}(n_1)-f_{n}(n_1-1)\right)\binom{2(n+1-n_1)}{n+1-n_1}^u.
	\end{align*}
	Then, we have
	\begin{equation}
		\begin{aligned}\label{eqq3}
			B&\geq\sum_{k=1}^{n_0}\binom{n}{2k-1}^r f_{n}(k)\binom{2(n+1-k)}{n+1-k}^u\\
			&=\sum_{k=1}^{n_1}\binom{n}{2k-1}^r f_{n}(k)\binom{2(n+1-k)}{n+1-k}^u+\sum_{k=n_1+1}^{n_0}\binom{n}{2k-1}^r f_{n}(k)\binom{2(n+1-k)}{n+1-k}^u\\
			&\geq\left(\sum_{k=1}^{n_1}\binom{n}{2k-2}^r f_{n}(k-1)\binom{2(n+1-k)}{n+1-k}^u+\Delta^\prime\right)+\sum_{k=n_1+1}^{n_0}\binom{n}{2k}^r f_{n}(k)\binom{2(n-k)}{n-k}^u\\
			&=\sum_{l=0}^{n_1-1}\binom{n}{2l}^r f_{n}(l)\binom{2(n-l)}{n-l}^u+\Delta^\prime+\sum_{k=n_1+1}^{n_0}\binom{n}{2k}^r f_{n}(k)\binom{2(n-k)}{n-k}^u\\
			&=\sum_{l=0,l\neq n_1}^{n_0} \binom{n}{2l}^r f_{n}(l)\binom{2(n-l)}{n-l}^u+\Delta^\prime,
		\end{aligned}
	\end{equation}
	where we use \eqref{eqdiff} in the first inequality.
	Thus, we have
	\begin{equation}\label{3.19}
		X(n+1)-2X(n)\geq\Delta^\prime-\binom{n}{2n_1}^r f_{n}(n_1)\binom{2(n-n_1)}{n-n_1}^u.
	\end{equation}
	Note that $\Delta^\prime\geq\Delta$, since $$\left\vert 2n_1-1-\frac{n}{2}\right\vert\leq\left\vert2n_1-2-\frac{n}{2}\right\vert$$ and $f_n(n_1)\geq f_n(n_1-1)$.
	Consequently, the inequality \eqref{3.19} implies \begin{equation}\label{3.17}
		X(n+1)-2X(n)\geq\Delta-\binom{n}{2n_1}^r f_{n}(n_1)\binom{2(n-n_1)}{n-n_1}^u.
	\end{equation}
	
	Write $n=4m+q$ with $m\in\Z^+$ and $q\in\{0,1,2,3\}$. We see that $n_1=m+1-\delta_{0,q}$. The proof of \eqref{3.15} then proceeds by cases, according to the value of $q$.
	
	If $q\in\{1,2\}$, then $$\left|\frac{n}{2}-2n_1\right|=\left| \frac{q}{2}+2\delta_{0,q}-2\right|=2-\frac{q}{2} \geq\frac{q}{2}=\left| \frac{q}{2}+2\delta_{0,q}\right| =\left| \frac{n}{2}-(2n_1-2)\right|,$$so we have $\binom{n}{2n_1-2}\geq\binom{n}{2n_1}$. Consequently, we have
	\begin{align}
		\begin{aligned}\label{3.18}
			\frac{1}{2}X(n)&\leq \sum_{\substack{0\leq l\leq n_0,\\l\notin\{n_1-1,n_1\}}} \binom{n}{2l}^rf_n(l)\binom{2(n-l)}{n-l}^u+\frac{1}{2}\sum_{l\in\{n_1-1,n_1\}}^{n_0} \binom{n}{2l}^rf_n(l)\binom{2(n-l)}{n-l}^u\\
			&\leq \sum_{\substack{0\leq l\leq n_0,\\l\notin\{n_1-1,n_1\}}} \binom{n}{2l}^rf_n(l)\binom{2(n-l)}{n-l}^u+\binom{n}{2n_1-2}^r f_{n}(n_1)\binom{2(n+1-n_1)}{n+1-n_1}^u\\
			&=X(n)+\Delta-\binom{n}{2n_1}^r f_{n}(n_1)\binom{2(n-n_1)}{n-n_1}^u.
		\end{aligned}
	\end{align}
	By \eqref{3.17} and \eqref{3.18}, we get that $$X(n+1)\geq\frac{3}{2}X(n).$$
	
	Now assume that $q=3$. We have $$\left|\frac{n}{2}-2n_1\right| =\frac{1}{2}=\left| \frac{n}{2}-(2n_1-1)\right| <\frac{3}{2}=\left| \frac{n}{2}-(2n_1-2)\right|,$$ so we get $$\binom{n}{2n_1-1}=\binom{n}{2n_1}>\binom{n}{2n_1-2}.$$
	Consequently, we have
	\begin{align}
		\begin{aligned}\label{3.20}
			\frac{1}{2}X(n)&\leq\sum_{\substack{0\leq l\leq n_0,\\l\notin\{n_1-1,n_1\}}} \binom{n}{2l}^r f_n(l)\binom{2(n-l)}{n-l}^u+\frac{1}{2}\sum_{l\in\{n_1-1,n_1\}} \binom{n}{2l}^rf_n(l)\binom{2(n-l)}{n-l}^u\\
			&\leq \sum_{\substack{0\leq l\leq n_0,\\l\notin\{n_1-1,n_1\}}} \binom{n}{2l}^r f_n(l)\binom{2(n-l)}{n-l}^u+\binom{n}{2n_1-1}^r f_{n}(n_1)\binom{2(n+1-n_1)}{n+1-n_1}^u\\
			&=X(n)+\Delta^\prime-\binom{n}{2n_1}^r f_{n}(n_1)\binom{2(n-n_1)}{n-n_1}^u.
		\end{aligned}
	\end{align}
	By \eqref{3.19} and \eqref{3.20}, we see that $$X(n+1)\geq\frac{3}{2}X(n).$$
	
	At last assume that $q=0$. Then we have $n=4m,n_0=2m$, and $n_1=m\geq2$. Note that
	\begin{equation*}
		\binom{n+1}{2n_1}^r=\binom{4m+1}{2m}^r=\left( \binom{4m}{2m}\frac{4m+1}{2m+1}\right) ^r\geq\frac{9}{5}\binom{4m}{2m}^r=\frac{9}{5}\binom{n}{2n_1}^r,
	\end{equation*}
	since $m\geq2$ and $r\geq1$. Thus, setting $$E=X(n+1)-\frac{9}{5}\binom{n}{2n_1}^rf_{n}(n_1)\binom{2(n+1-n_1)}{n+1-n_1}^u,$$ we have
	\begin{align}
		\begin{aligned}\label{eqE}
			E&\geq X(n+1)-\binom{n+1}{2n_1}^rf_{n+1}(n_1)\binom{2(n+1-n_1)}{n+1-n_1}^u\\
			&=\binom{2(n+1)}{n+1}^u+\sum_{1\leq k\leq n_0,k\neq n_1}\binom{n+1}{2k}^rf_{n+1}(k)\binom{2(n+1-k)}{n+1-k}^u\\
			&\geq\binom{2n}{n}^u+\sum_{1\leq k\leq n_0,k\neq n_1}\left( \binom{n}{2k}^r+\binom{n}{2k-1}^r\right)f_{n}(k)\binom{2(n+1-k)}{n+1-k}^u\\
			&\geq X(n)-\binom{n}{2n_1}^rf_{n}(n_1)\binom{2(n+1-n_1)}{n+1-n_1}^u+\sum_{k=1, k\neq n_1}^{n_0}\binom{n}{2k-1}^rf_{n}(k)\binom{2(n+1-k)}{n+1-k}^u.
		\end{aligned}
	\end{align}
	Recall that $B=X(n+1)-X(n)$. From \eqref{eqE}, we see that
	\begin{equation}\label{eqq32}
		B\geq\frac{4}{5}\binom{n}{2n_1}^rf_{n}(n_1)\binom{2(n+1-n_1)}{n+1-n_1}^u+\sum_{k=1, k\neq n_1}^{n_0} \binom{n}{2k-1}^rf_{n}(k)\binom{2(n+1-k)}{n+1-k}^u.
	\end{equation}
	We claim that
	\begin{equation}\label{eqq33}
		\sum_{k=1, k\neq n_1}^{n_0} \binom{n}{2k-1}^rf_{n}(k)\binom{2(n+1-k)}{n+1-k}^u\geq\sum_{\substack{0\leq l\leq n_0,\\l\notin\{n_1-1,n_1\}}} \binom{n}{2l}^r f_{n}(l)\binom{2(n-l)}{n-l}^u.
	\end{equation}
	In fact, the inequality \eqref{eqq33} follows from the inequalities $$\sum_{k=1}^{n_1-1} \binom{n}{2k-1}^rf_{n}(k)\binom{2(n+1-k)}{n+1-k}^u\geq\sum_{k=1}^{n_1-1} \binom{n}{2k-2}^rf_{n}(k-1)\binom{2(n+1-k)}{n+1-k}^u$$ and $$\sum_{k=n_1+1}^{n_0} \binom{n}{2k-1}^rf_{n}(k)\binom{2(n+1-k)}{n+1-k}^u\geq\sum_{k=n_1+1}^{n_0} \binom{n}{2k}^rf_{n}(k)\binom{2(n-k)}{n-k}^u.$$
	Observe that the term $$\binom{n}{2n_1}^rf_{n}(n_1)\binom{2(n+1-n_1)}{n+1-n_1}^u
	=\binom{4m}{2m}^rf_{4m}(m)\binom{6m+2}{3m+1}^u$$
	is not smaller than both $$\binom{n}{2n_1}^rf_{n}(n_1)\binom{2(n-n_1)}{n-n_1}^u=\binom{4m}{2m}^rf_{4m}(m)\binom{6m}{3m}^u$$
	and $$\binom{n}{2n_1-2}^rf_{n}(n_1-1)\binom{2(n-(n_1-1))}{n-(n_1-1)}^u=\binom{4m}{2m-2}^rf_{4m}(m-1)\binom{6m+2}{3m+1}^u.$$
	Hence, we have
	\begin{equation}\label{eqq34}
		\binom{n}{2n_1}^rf_{n}(n_1)\binom{2(n+1-n_1)}{n+1-n_1}^u\geq\frac{1}{2}\sum_{l\in\{n_1-1,n_1\}}\binom{n}{2l}^r f_{n}(l)\binom{2(n-l)}{n-l}^u.
	\end{equation}
	Therefore, by \eqref{eqq32}, \eqref{eqq33}, and \eqref{eqq34}, we deduce that
	\begin{equation}
		\begin{aligned}\label{3.21}
			B&\geq\frac{4}{5}\binom{n}{2n_1}^rf_{n}(n_1)\binom{2(n+1-n_1)}{n+1-n_1}^u+\sum_{\substack{0\leq l\leq n_0,\\l\notin\{n_1-1,n_1\}}} \binom{n}{2l}^r f_{n}(l)\binom{2(n-l)}{n-l}^u\\
			&\geq\frac{2}{5}\sum_{l\in\{n_1-1,n_1\}} \binom{n}{2l}^r f_{n}(l)\binom{2(n-l)}{n-l}^u+\frac{2}{5}\sum_{\substack{0\leq l\leq n_0,\\l\notin\{n_1-1,n_1\}}} \binom{n}{2l}^r f_{n}(l)\binom{2(n-l)}{n-l}^u\\
			&=\frac{2}{5}X(n),
		\end{aligned}
	\end{equation}
	which gives $$X(n+1)\geq\frac{7}{5}X(n).$$
	
	We have completed the proof of the claimed inequality \eqref{3.15} (and hence the theorem).
\end{proof}
\begin{remark}\label{Rem3.3}
	For every $n,r_1,r_2,s,t,u\in\Z_{\geq0}$ with $r_1+r_2\geq1$, define $$V(n,r_1,r_2,s,t,u)=\sum_{k=0}^n \binom{n}{ k}^{r_1}\binom{n}{2k}^{r_2}\binom{n+k}{k}^s\binom{2k}{k}^t\binom{2(n-k)}{n-k}^u.$$ Here we follow the convention $0^0=1$. Note that $(V(n,1,1,0,0,0))_{n}$ is the sequence of {\it quadrinomial coefficients} \seqnum{A005725}. When $r_1r_2=0$, i.e., when $r_1=0$ or $r_2=0$, the sequence $(V(n,r_1,r_2,s,t,u))_{n=1}^\infty$ is realizable by Remark \ref{Rem3.2} and Theorem \ref{Thm1.18}. Indeed, for all $r_1,r_2,s,t,u\in\Z_{\geq0}$ with $r_1+r_2\geq1$, an essentially same argument as the one given in the proof of Theorem \ref{Thm1.18} shows that the sequence $(V(n,r_1,r_2,s,t,u))_{n=1}^\infty$ is realizable. For shortening the paper, we omit the details.
\end{remark}
\begin{remark}\label{Rem3.4}
	Let $r_1,r_2,s,t,u$ be non-negative integers with $r_1+r_2\geq1$. Set $V(n)=V(n,r_1,r_2,s,t,u)$ for $n\geq1$. Clearly, we have $V(1)>0$. By checking the proof of Lemma \ref{Lem2.4}, it is easy to see that $(\mu*V)(n)>0$ for every $n\geq1$. According to Remark \ref{Rem3.3}, the sequence $(V(n))_{n}^\infty$ is realizable. Then $(\mu*V)(n)\geq n$ for all $n\geq1$. Thus, for every map $T:X\to X$ realizing $V$ and $n\in\Z^+$, there is at least one periodic orbit of $T$ with exact period $n$ in $X$.
\end{remark}
\begin{proof}[Proof of Theorem \ref{Thm1.20}]
	\leavevmode
	\begin{enumerate}[(1)]
		\item \label{Catalanpf} For an arbitrary prime number $p$, we have
		\begin{equation}\label{3.22}
			C(p)\equiv (p+1)C(p)=\binom{2p}{p}=\frac{(p+1)\cdots(p+p-1)}{(p-1)!}\times2\equiv 2\pmod{p},
		\end{equation}where the modulo is taken over the ring $\Z_p$. Thus, we have
		\begin{equation}\label{Catalanp}
			(\mu*C)(p)=C(p)-C(1)=C(p)-1\equiv 1\pmod{p}.
		\end{equation}
		Assume that $(C(n))_{n=1}^\infty$ is almost realizable. By \eqref{Catalanp}, we see that $p\mid{\rm Fail}(C)$. However, the prime number $p$ can be arbitrary large, contradicting the fact that $\Z^+\ni{\rm Fail}(C)<\infty$. Therefore, the sequence $(C(n))_{n=1}^\infty$ is not almost realizable.
		\item Let $p$ be an arbitrary odd prime number. By using \eqref{3.22} and Lemma \ref{Lem2.1}, we get
		\begin{align}
			\begin{aligned}\label{Motp}
				(\mu*M)(2p)&=M(2p)-M(p)-M(2)+M(1)\\
				&=\sum_{k=0}^{p} \binom{2p}{2k} C(k)-\sum_{k=0}^{\frac{p-1}{2}} \binom{p}{2k} C(k)-2+1\\
				&\equiv\binom{2p}{0} C(0)+\binom{2p}{2p} C(p)- \binom{p}{0} C(0)-2+1\\
				&=C(p)-1\\
				&\equiv1\pmod{p}.
			\end{aligned}
		\end{align}
		Assume that $(M(n))_{n=1}^\infty$ is almost realizable. By \eqref{Motp}, we deduce that $p\mid{\rm Fail}(M)$. Similar to (\ref{Catalanpf}), we see that $(M(n))_{n=1}^\infty$ is not almost realizable.
		\item Let $p$ be an arbitrary odd prime number. For an integer $p/2<k<p-1$, we have $$C(k)=\frac{1}{k+1}\binom{2k}{k}\equiv0\pmod{p}$$ by Lemma \ref{Lem2.1}. For an integer $0<k<p/2$, we have $\binom{p+k}{2k}\equiv0$ (mod $p$) by Lemma \ref{Lem2.1}. Note that $$C(p-1)=\frac{1}{p}\binom{2p-2}{p-1}=\frac{(p+1)\cdots(2p-2)(2p-1)}{(p-1)!}\frac{1}{2p-1}\equiv-1\pmod{p}.$$Then with \eqref{3.22}, we have
		\begin{align*}
			(\mu*S)(p)&=S(p)-S(1)=\sum_{k=0}^{p} \binom{p+k}{2k} C(k)-2\\
			&\equiv \binom{p}{0} C(0)+\binom{2p-1}{2p-2} C(p-1)+ \binom{2p}{2p}C(p)-2\\
			&\equiv C(p)-C(p-1)-1\\
			&\equiv2-(-1)-1=2\pmod{p}.
		\end{align*}
		Similar to (\ref{Catalanpf}), we see that $(S(n))_{n=1}^\infty$ is not almost realizable.
	\end{enumerate}
\end{proof}
\begin{remark}\label{Rem3.5}
	Although the sequences in Theorem \ref{Thm1.20} are not almost realizable, they all satisfy the sign condition. Clearly, we have $C(n+1)=\frac{2(2n+1)}{n+2}C(n)\geq2C(n)$, for every $n\geq1$. Hence, by Lemma \ref{Lem2.4}, the sequence $(C(n))_{n=1}^\infty$ satisfies the sign condition. Aigner \cite[Proposition 3]{Ai} proved that the sequence $(M(n))_{n=0}^\infty$ is log-concave, so the sign condition follows from $M(2)/M(1)=2$ and Lemma \ref{Lem2.4}. Xia and Yao \cite[Corollary 7]{XY} showed that the sequence $(s(n))_{n=0}^\infty$ of little Schr\"oder numbers is strictly log-concave, so the sign condition for $(s(n))_{n=1}^\infty$ follows from $s(2)/s(1)=3$ and Lemma \ref{Lem2.4}. Clearly, the sequence $(S(n))_{n=1}^\infty$ of large Schr\"oder numbers also satisfies the sign condition.
\end{remark}

\section{Acknowledgments}
We thank the anonymous referee for a careful reading of a first version of the manuscript and for numerous insightful remarks and suggestions that greatly improved the quality of the paper.

\bigskip
\hrule
\bigskip

\noindent 2020 {\it Mathematics Subject Classification}:
Primary 11B50; Secondary 37P35, 05A10, 11B83.

\noindent \emph{Keywords: } realizability, almost realizability, Ap\'ery number, Franel number, Domb number, central Delannoy number.

\bigskip
\hrule
\bigskip

\noindent (Concerned with sequences
\seqnum{A000032},
\seqnum{A000041},
\seqnum{A000045},
\seqnum{A000108},
\seqnum{A000110},
\seqnum{A000166},
\seqnum{A000172},
\seqnum{A000225},
\seqnum{A000364},
\seqnum{A000984},
\seqnum{A001003},
\seqnum{A001006},
\seqnum{A001067},
\seqnum{A001263},
\seqnum{A001850},
\seqnum{A002426},
\seqnum{A002445},
\seqnum{A002893},
\seqnum{A002895},
\seqnum{A005258},
\seqnum{A005259},
\seqnum{A005260},
\seqnum{A005725},
\seqnum{A006318},
\seqnum{A006953},
\seqnum{A053175},
\seqnum{A054783},
\seqnum{A062510},
\seqnum{A081085},
\seqnum{A122045}, and
\seqnum{A226158}.)

\end{document}